\def\ds{\displaystyle}
\renewcommand\S[1]{\mathrm{S}_{#1}}
\newcommand\A[1]{\mathrm{A}_{#1}}
\def\Hom{\mathrm{Hom}}
\def\Bij{\mathrm{Bij}}
\def\Or{\mathrm{Or}}
\def\det{\mathrm{det}}
\def\norm{\mathrm{Nm}}
\def\sgn{\mathrm{sgn}}
\def\trace{\mathrm{Tr}}
\def\qdet{\mathrm{qdet}}
\def\id{\mathrm{id}}
\def\Q{\mathbb{Q}}
\def\Z{\mathbb{Z}}
\def\Pf{\mathrm{Pf}}
\newcommand{\fundgroup}[1]{\pi(#1)}
\newcommand{\B}[1]{\operatorname{\mathbf{B}{#1}}}
\newcommand{\F}[1]{\ensuremath{\mathbb{F}_{#1}}}
\newcommand{\extpower}{{\textstyle\bigwedge}}
\renewcommand{\subset}{\subseteq}
\newcommand{\ferrand}{\Phi}
\def\eps{\varepsilon}
\def\simto{\ds\mathop{\longrightarrow}^\sim\,}
\def\into{\hookrightarrow}
\newcommand{\Rbase}{{R_1}}
\newcommand{\Rbasetwo}{{R_2}}
\newcommand{\Xbase}{{X_1}}
\newcommand{\Xbasetwo}{{X_2}}
\newcommand{\transpose}[1]{#1^{\top}}
\newcommand{\set}[1]{\{1,\dots,#1\}}
\newcommand{\fixpower}[4][]{({#2}^{\otimes_{#1} #3})^{#4}}
\newcommand{\loosdiscquad}[2][]{\mathfrak{D}_{#1}(#2)}
\newcommand{\loosdiscalg}{\mathrm{Dis}}
\newcommand{\conjugate}[2]{#1^{(#2)}}
\newcommand{\permgroup}[1]{\mathrm{S}_{#1}}
\newcommand{\altgroup}[1]{\mathrm{A}_{#1}}
\newcommand{\floor}[1]{\left\lfloor#1\right\rfloor}
\newcommand{\disc}{\delta}
\newcommand{\discalg}{\Delta}
\newcommand{\stdi}[1]{\tau}
\newcommand{\basis}{\mathrm{e}}
\newcommand{\cat}[1]{\operatorname{\mathbf{#1}}}
\newcommand{\mb}[1]{\mathbb{#1}}
\newcommand{\Aff}{\cat{Aff}}
\newcommand{\Alg}[1]{\cat{Alg}_#1}
\newcommand{\Et}[1]{\cat{\acute{E}t}_#1}
\def\midotimes{\bigotimes}
\newtheorem{theorem}{Theorem}[section] 
\crefname{theorem}{Theorem}{Theorems}
\newtheorem{lemma}[theorem]{Lemma}
\crefname{lemma}{Lemma}{Lemmas}
\crefname{proposition}{Proposition}{Propositions}
\newtheorem{corollary}[theorem]{Corollary}
\crefname{corollary}{Corollary}{Corollaries}
\theoremstyle{definition} 
\newtheorem{definition}[theorem]{Definition}
\crefname{definition}{Definition}{Definitions}
\newtheorem{remark}[theorem]{Remark}
\crefname{remark}{Remark}{Remarks}
\newtheorem{example}[theorem]{Example}
\crefname{example}{Example}{Examples}
\author{Owen Biesel and Alberto Gioia}
\begin{document}
\title{Isomorphisms of Discriminant Algebras}
\maketitle
\begin{abstract}
 For each natural number $n$, we define a category whose objects are \emph{discriminant algebras in rank $n$}, i.e.\ functorial means of attaching to each rank-$n$ algebra a quadratic algebra with the same discriminant. 
 We show that the discriminant algebras defined in \cite{Bie15}, \cite{LoosDiscAlg}, and \cite{Rost} are all isomorphic in this category, and prove furthermore that in ranks $n\leq 3$ discriminant algebras are unique up to unique isomorphism.
\end{abstract}
\tableofcontents

\section{Introduction and definitions}

A \emph{discriminant algebra} in rank $n$ is an isomorphism- and base-change-preserving way of associating to each ring $R$ and rank-$n$ algebra\footnote{i.e.\ projective of constant rank $n$ as an $R$-module} $A$  a rank-$2$ algebra with the same discriminant bilinear form as $A$.
Discriminant algebras have been constructed in rank $3$ by Markus Rost \cite{Rost}, in arbitrary rank by Ottmar Loos in \cite{LoosDiscAlg}, and in rank $\geq 2$ by the authors in \cite{Bie15}.
The goal of this paper is to show that these three constructions are in some sense isomorphic; in this section we define the category of discriminant algebras in order to make the notion of isomoprhism precise.

\begin{definition}
Form the following categories, $\Aff$, $\cat{Disc}$, and $\Alg{n}$ for each natural number $n$, as follows:
\begin{itemize}
\item $\Aff$ is the category of affine schemes, the opposite of the category of commutative rings.
\item $\cat{Disc}$ is the category of discriminant data.
It has as objects triples $(R, L, d)$, where $R$ is a commutative ring, $L$ is a locally free rank-$1$ $R$-module, and $d\colon L^{\otimes 2}=L\otimes_R L\to R$ is an $R$-module homomorphism.
A morphism from $(R',L',d')$ to $(R,L,d)$ consists of a ring homomorphism $R\to R'$ and an $R'$-module isomorphism $R'\otimes_R L\simto L'$ such that the following square commutes:
\[\begin{tikzcd}R'\otimes_R (L^{\otimes 2}) \arrow{d}{\id_{R'}\otimes d}\arrow{r}{\sim} & L'^{\otimes_{R'} 2} \arrow{d}{d'}\\
R'\otimes_R R \arrow{r}{\sim} & R'.\end{tikzcd}\]
The forgetful functor $\cat{Disc}\to\Aff : (R,L,d)\mapsto R$ makes $\cat{Disc}$ a category fibered in groupoids (CFG) over $\Aff$.
\item $\Alg{n}$ is the category of rank-$n$ algebras.
Its objects are pairs $(R,A)$ with $A$ an $R$-algebra that is locally free of rank $n$ as an $R$-module.
A morphism $(R',A')\to(R,A)$ consists of a ring homomorphism $R\to R'$ and an $R'$-algebra isomorphism $R'\otimes_R A\simto A'$.
The forgetful functor $\Alg{n}\to\Aff : (R,A)\mapsto R$ makes $\Alg{n}$ a CFG over $\Aff$.
\end{itemize} 
\end{definition}
There is a map of CFGs $\extpower^n\colon\Alg{n}\to\cat{Disc}$ over $\Aff$ sending a rank-$n$ algebra to its discriminant data: $(R,A)\mapsto (R, \extpower^n A, \disc_{A})$, where
\begin{align*}
 \disc_A &\colon \extpower^n A\otimes \extpower^n A \to R\\
 &\colon (a_1\wedge\dots\wedge a_n) \otimes (b_1\wedge\dots\wedge b_n)\mapsto \det\bigl(\trace(a_ib_j)\bigr)_{ij}
\end{align*}
is the \emph{discriminant bilinear form} of $A$.

\begin{definition}\label{discalg}
 A \emph{discriminant algebra} in rank $n$ is a factorization of $\extpower^n\colon\Alg{n}\to\cat{Disc}$ via $\Alg{2}$.
 More specifically, it consists of a morphism $D$ of CFGs $\Alg{n}\to\Alg{2}$ over $\Aff$ together with a $2$-isomorphism filling the following triangle of morphisms of CFGs:
 \[\xymatrix{
  \Alg{n} \ar[rr]^{\extpower^n} \ar[dr]_{D} & \ar@{<=}[d]^{\sim}& \cat{Disc}\\
  & \Alg{2}\ar[ru]_{\extpower^2} & .
}\]
 In other words, to exhibit a discriminant algebra we must associate to every rank-$n$ $R$-algebra $A$ a rank-$2$ $R$-algebra $D(R,A)$ together with functorial base-change isomorphisms $R'\otimes_R D(R,A)\cong D(R',R'\otimes_R A)$, and we must exhibit a natural isomorphism $\extpower^2 D(R,A)\cong \extpower^n A$ that identifies the discriminant forms of $A$ and $D(R,A)$.

The \emph{category of discriminant algebras} is then just the category of such factorizations of $\extpower^n$ via $\extpower^2$. In particular, an \emph{isomorphism} of discriminant algebras is a natural isomorphism of functors $\Alg{n}\to\Alg{2}$ over $\Aff$ making the resulting triangle of natural isomorphisms commute.
In other words, an isomorphism between two discriminant algebras $D$ and $D'$ is an isomorphism $D(R,A)\cong D'(R,A)$ that respects base change, and that also induces the composite isomorphism $\extpower^2 D(R,A) \cong \extpower^n A \cong \extpower^2 D'(R,A)$ coming from the data of the two discriminant algebras. 
\end{definition}

\section{Properties of discriminant algebras}

\subsection{Existence}

The first important fact about discriminant algebras is that they exist.
We review the construction of one such morphism from \cite{Bie15}.
Let $R$ be a ring and $A$ a rank-$n$ $R$-algebra; the \emph{Ferrand homomorphism} $\ferrand_{A/R}\colon\fixpower[R]{A}{n}{\S{n}}\to R$ is the unique $R$-algebra homorphism such that for each $R$-algebra $R'$ and element $a\in A'\coloneqq  R'\otimes_R A$, the composite homomorphism
\[\fixpower[R']{A'}{n}{\S{n}} \cong R'\otimes_R \fixpower[R]{A}{n}{\S{n}}\to R'\otimes_R R \cong R'\]
sends $a\otimes\dots\otimes a$ to $\norm_{A'/R'}(a)$.
(This is the algebra homomorphism representing the multiplicative homogeneous degree-$n$ polynomial law $\norm\colon A\to R$ in the sense of \cite{MultHomMaps}.)
Then define
\[\discalg(R,A) \coloneqq  \fixpower{A}{n}{\A{n}}\ \midotimes_{\mathclap{\fixpower{A}{n}{\S{n}}}}\ R,\]
where the maps defining the tensor product are the inclusion $\fixpower{A}{n}{\S{n}}\into\fixpower{A}{n}{\A{n}}$ and the Ferrand homomorphism $\fixpower{A}{n}{\S{n}}\to R$.
By \cite[Theorem 4.1]{Bie15}, this is a quadratic $R$-algebra, and we have the following theorem:

\begin{theorem}\label{existence}
 Let $n\geq 2$ be a natural number.
 For each ring $R$ and rank-$n$ $R$-algebra $A$, let $\Delta(R,A)$ be the rank-$2$ $R$-algebra defined above.
 This assignment constitutes a discriminant algebra $\Delta:\Alg{n}\to\Alg{2}$.
\end{theorem}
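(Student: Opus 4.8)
The statement bundles together three claims: that $\discalg(R,A)$ is always a rank-$2$ $R$-algebra; that $\discalg$ is a morphism of CFGs $\Alg{n}\to\Alg{2}$ over $\Aff$, i.e.\ is compatible with algebra isomorphisms and with base change; and that there is a natural isomorphism $\extpower^2\discalg(R,A)\cong\extpower^n A$ identifying $\disc_{\discalg(R,A)}$ with $\disc_A$. The first is precisely \cite[Theorem 4.1]{Bie15}, so I would cite it and put all the effort into the other two.

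Functoriality in isomorphisms is straightforward: an isomorphism $A\simto B$ of rank-$n$ $R$-algebras induces an $\S{n}$-equivariant isomorphism $A^{\otimes_R n}\simto B^{\otimes_R n}$, hence isomorphisms of the subrings $\fixpower[R]{A}{n}{\S{n}}$ and $\fixpower[R]{A}{n}{\A{n}}$, and these respect the Ferrand homomorphisms because $\ferrand_{A/R}$ is characterized by a property (representing the norm law $\norm\colon A\to R$) that is manifestly preserved by algebra isomorphisms; tensoring then gives $\discalg(R,A)\simto\discalg(R,B)$, and uniqueness of the Ferrand homomorphism makes the assignment functorial. For base change along a ring map $R\to R'$, set $A'=R'\otimes_R A$; the canonical isomorphism $R'\otimes_R A^{\otimes_R n}\simto (A')^{\otimes_{R'} n}$ is $\S{n}$-equivariant, so the only real issue is whether forming the invariant subrings commutes with this base change. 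For $\fixpower[R]{A}{n}{\S{n}}$ this is classical: it is the module $\mathrm{TS}^n_R(A)$ of symmetric tensors, which, because $A$ is projective, agrees with the divided power $\Gamma^n_R(A)$ and so commutes with arbitrary base change. For $\fixpower[R]{A}{n}{\A{n}}$ the corresponding statement, together with its local freeness over $R$, is part of the analysis in \cite{Bie15}; this is the one genuinely delicate input, since for a non-flat $R\to R'$ invariants need not commute with base change without it. The Ferrand homomorphism commutes with base change by the same universal property, tensor products commute with base change, and the cocycle condition over a composite $R\to R'\to R''$ follows from a diagram chase that once more uses uniqueness of the Ferrand homomorphism.

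The substantive step is the natural isomorphism $\extpower^2\discalg(R,A)\cong\extpower^n A$ of discriminant data. Exterior powers always commute with base change, so $\extpower^2_R\discalg(R,A)$ is the base change along $\ferrand_{A/R}$ of the second exterior power over $\fixpower[R]{A}{n}{\S{n}}$ of the module $\fixpower[R]{A}{n}{\A{n}}$. I would therefore first build a natural homomorphism from $\extpower^n A$ into $\extpower^2\discalg(R,A)$ out of the quotient map $A^{\otimes_R n}\onto\extpower^n A$ and the rank-$2$ structure of $\fixpower[R]{A}{n}{\A{n}}$ over $\fixpower[R]{A}{n}{\S{n}}$ after Ferrand base change, and then check that it is an isomorphism. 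Here one cannot simply pass to the split case $A\cong R^n$, because a rank-$n$ algebra need not become split even after a faithfully flat extension (e.g.\ $k[x]/x^n$ over a field $k$); instead one verifies the isomorphism on explicit generators, or reduces to a versal rank-$n$ algebra over an explicit base ring and computes there, along the lines of \cite{Bie15}. Finally one unwinds the two discriminant bilinear forms — $\disc_A$ computed from $\trace$ on $A$, and $\disc_{\discalg(R,A)}$ computed from $\trace$ and $\norm$ on the quadratic algebra $\discalg(R,A)$ — and checks that the isomorphism carries the second to the first. I expect this last comparison of the two trace forms to be the main obstacle: it is in no way formal, it amounts to tracking the trace form of $A$ up to the $\A{n}$-invariant level, and it is where the argument leans hardest on the explicit computations in \cite{Bie15}.
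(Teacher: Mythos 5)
Your proposal is correct and follows essentially the same route as the paper: the substantive content (rank $2$, the discriminant-identifying isomorphism of top exterior powers, and compatibility with base change) is delegated to Theorems 4.1 and 6.1 of \cite{Bie15}, with the remaining functoriality and cocycle checks being formal consequences of the canonical isomorphism $R'\otimes_R(A^{\otimes_R n})\cong(R'\otimes_R A)^{\otimes_{R'} n}$. Your additional care about whether forming $\A{n}$-invariants commutes with base change, and about the impossibility of reducing to a split algebra, correctly identifies where the cited results are doing the real work.
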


\begin{proof}
 Theorem 4.1 of \cite{Bie15} tells us that $\Delta(R,A)$ has rank $2$ and is equipped with a discriminant-preserving isomorphism between its top exterior power and that of $A$.
 Theorem 6.1 there gives us a canonical isomorphism $R'\otimes_R \Delta(R,A) \cong \Delta(R', R'\otimes_R A)$ for each $R$-algebra $R'$.
 Since this isomorphism is induced by the canonical isomorphism $R'\otimes_R (A^{\otimes_R n}) \cong (R'\otimes_R A)^{\otimes_{R'} n}$, it is easy to check that it is functorial with respect to multiple changes of base, and preserves the discriminant-preserving isomorphism between its top exterior power and $A$'s.
\end{proof}

Given any $R$-algebra $A$, it will be helpful to refer to the following elements of $A^{\otimes n}$ and $\fixpower{A}{n}{\S{n}}$:
\begin{itemize}
 \item For each $a\in A$ and $i\in\set{n}$, the element $\conjugate{a}{i}\in A^{\otimes n}$ is the pure tensor $1\otimes\dots\otimes 1\otimes a\otimes 1\otimes \dots\otimes 1$, with the $a$ in the $i$th position.
 \item For each $a\in A$ and $k\in\{0,\dots,n\}$, the element $e_k(a)\in \fixpower{A}{n}{\S{n}}$ is the sum
 \[e_k(a) := \sum_{\substack{i_1<\dots<i_k\\\in\set{n}}}\prod_{j=1}^k \conjugate{a}{i_j}.\]
 \end{itemize}
If $A$ is a rank-$n$ $R$-algebra so that we have the Ferrand homomorphism $\ferrand_{A/R}\colon \fixpower{A}{n}{\S{n}}\to R$, then the image of $e_k(a)$ in $R$ is the coefficient of $(-1)^k\lambda^{n-k}$ in $\norm_{A[\lambda]/R[\lambda]}(\lambda-a)$, the characteristic polynomial of $a$.
 This coefficient is denoted $s_k(a)$.
 In particular, $s_n(a)=\norm_{A/R}(a)$ and $s_1(a)=\trace_{A/R}(a)$, and $s_2\colon A\to R$ is called the \emph{quadratic trace} and is used in Loos's construction of a discriminant algebra.
 See \cite[Section 3]{Bie15} for more on $\ferrand_{A/R}$.

It will also be helpful to note that $\discalg(R,A)$ is generated as an $R$-module by $\{1\}\cup\{\gamma(a_1,\dots,a_n):a_1,\dots,a_n\in A\}$, where $\gamma(a_1,\dots,a_n)$ is the image under the map $\fixpower{A}{n}{\A{n}}\to \discalg(A,R)$ of the element
\[\sum_{\sigma\in\A{n}}\prod_{i=1}^n \conjugate{a_i}{\sigma(i)}.\]
If two of the $a_i$ are equal, or if we add $\gamma(a_1,a_2,\dots,a_n)$ to $\gamma(a_2,a_1,\dots,a_n)$, then the result is the image of an $\S{n}$-invariant element of $\fixpower{A}{n}{\A{n}}$ and is set equal to its image under $\ferrand_{A/R}\colon \fixpower{A}{n}{\S{n}}\to R$.
The discriminant-identifying isomorphism $\extpower^2 \discalg(R,A)\to \extpower^n A$ sends elements of the form $1\wedge\gamma(a_1,\dots,a_n)$ to $a_1\wedge\dots\wedge a_n$.

Note also that $\gamma(a_1,a_2,\dots,a_n)-\gamma(a_2,a_1,\dots,a_n)$ is the image in $\discalg(R,A)$ of the $\A{n}$-invariant tensor
\[\sum_{\sigma\in\S{n}}\sgn(\sigma)\prod_{i=1}^n\conjugate{a_i}{\sigma(i)}\in\fixpower{A}{n}{\A{n}}.\]

\subsection{Description when $2$ is a unit}

We also know the following about every discriminant algebra operation in the case that $2$ is invertible:

\begin{theorem}
 Let $D\colon \Alg{n}\to\Alg{2}$ be a discriminant algebra, and let $R$ be a ring in which $2$ is a unit.
 Then for each rank-$n$ algebra $A$, there is a canonical isomorphism $D(R,A)\cong R\oplus \extpower^n A$, where the multiplication on the latter module has identity $(1,0)$ and the product of $\xi,\psi\in \extpower^n A$ is $\disc_{A}(\xi,\psi)/4\in R$.
\end{theorem}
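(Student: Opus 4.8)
The plan is to invoke the structure theory of rank-$2$ algebras in the presence of $\tfrac12$. Set $B\coloneqq D(R,A)$. As a locally free rank-$2$ $R$-module carrying an $R$-algebra structure, $B$ is commutative and has the standard involution $b\mapsto\bar b=\trace_{B/R}(b)-b$, with $b\bar b=\norm_{B/R}(b)$ and $b^2-\trace_{B/R}(b)\,b+\norm_{B/R}(b)=0$; these are all local assertions, verified on the standard local model $B\cong R[x]/(x^2-tx+n)$. Since $2\in R^\times$, the map $\tfrac12\trace_{B/R}\colon B\to R$ splits the structure map $R\to B$, giving a canonical decomposition $B=R\cdot 1\oplus B_0$ of $R$-modules with $B_0\coloneqq\ker(\trace_{B/R})$ locally free of rank $1$. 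For $b\in B_0$ the quadratic relation reads $b^2=-\norm_{B/R}(b)\in R$; in particular $B_0\cdot B_0\subseteq R\cdot 1$.

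Next I would identify $B_0$ with $\extpower^n A$. From the decomposition $B=R\cdot 1\oplus B_0$ and the vanishing of $\extpower^2$ of a rank-$1$ module, the canonical map $B_0\to\extpower^2 B$, $b\mapsto 1\wedge b$, is an $R$-module isomorphism. Composing it with the discriminant-identifying isomorphism $\extpower^2 B=\extpower^2 D(R,A)\xrightarrow{\sim}\extpower^n A$ that is part of the data defining $D$ yields an $R$-module isomorphism $\phi\colon B_0\xrightarrow{\sim}\extpower^n A$, hence a canonical $R$-module isomorphism $B=R\cdot 1\oplus B_0\xrightarrow{\sim}R\oplus\extpower^n A$ sending $1$ to $(1,0)$. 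Because every ingredient (trace form, exterior power, the base-change--compatible datum of $D$) is canonical and respects base change, so does the resulting isomorphism, and it involves no choices.

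It remains to check that $\phi$ intertwines the multiplication of $B$ with the product on $R\oplus\extpower^n A$ described in the statement. Since $B$ is commutative and $B_0\cdot B_0\subseteq R\cdot 1$, it suffices to compute $bb'\in R$ for $b,b'\in B_0$ and compare with $\tfrac14\disc_A(\phi(b),\phi(b'))$. For this, evaluate $\disc_B(1\wedge b,\,1\wedge b')=\det\bigl(\trace_{B/R}(u_iu_j)\bigr)_{ij}$ with $u_1=1$ and $u_2$ equal to $b$ (resp.\ $b'$): using $\trace_{B/R}(1)=2$, $\trace_{B/R}(b)=\trace_{B/R}(b')=0$, and $\trace_{B/R}(bb')=2\,bb'$ (the trace of a scalar), this determinant equals $4\,bb'$. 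Because the isomorphism $\extpower^2 B\cong\extpower^n A$ supplied by $D$ carries $\disc_B$ to $\disc_A$, this gives $\disc_A(\phi(b),\phi(b'))=4\,bb'$, i.e.\ $\phi(b)\phi(b')=\tfrac14\disc_A(\phi(b),\phi(b'))$. Polarizing (again using $2\in R^\times$), this is equivalent to $\xi^2=\tfrac14\disc_A(\xi,\xi)$ for all $\xi\in\extpower^n A$, which is exactly the multiplication in the statement.

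The only genuine computation is the determinant identity $\disc_B(1\wedge b,1\wedge b')=4\,bb'$; everything else is bookkeeping with the standard decomposition of a quadratic algebra. The one point to handle carefully is keeping the locally-free-of-rank-$1$ arguments honest — that $B$ is commutative with a standard involution and that the canonical map $B_0\to\extpower^2 B$ is an isomorphism — each of which reduces to the local model $R[x]/(x^2-tx+n)$.
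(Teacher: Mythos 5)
Your proposal is correct and follows essentially the same route as the paper: the paper simply cites \cite[Proposition 6.2]{Bie15} for the canonical splitting $D(R,A)\cong R\oplus\extpower^2 D(R,A)$ with product $\disc(\xi,\psi)/4$, and then composes with the discriminant-identifying isomorphism $\extpower^2 D(R,A)\cong\extpower^n A$. The only difference is that you prove the cited proposition inline (via the trace splitting, the identification $\ker(\trace)\cong\extpower^2 D(R,A)$, and the determinant computation giving $4bb'$), all of which is correct.
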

\begin{proof}
 Since $D(R,A)$ is a quadratic algebra over a ring in which $2$ is a unit, by \cite[Proposition 6.2]{Bie15} we have a canonical isomorphism $D(R,A)\cong R\oplus \extpower^2 D(R,A)$ with unit $(1,0)$ and  multiplication $\xi\cdot\psi = \disc_D(\xi,\psi)/4$.
 Using the discriminant-identifying isomorphism $\extpower^2 D\cong\extpower^n A$, we obtain the desired description of $D(R,A)$.
\end{proof}

\subsection{Description in the \'etale case}

We also know what a discriminant algebra does to a connected ring $R$ and finite \'etale algebra $A$.

\begin{theorem}
 Let $D\colon \Alg{n}\to\Alg{2}$ be a discriminant algebra with $n\geq 2$.
 Let $R$ be a ring equipped with a geometric point $R\to K$ and corresponding fundamental group $\fundgroup{R,K}$, and let $A$ be a rank-$n$ \'etale $R$-algebra, with corresponding $\fundgroup{R,K}$-set $X$.
 Then $D(R,A)$ is also \'etale, and corresponds to the $2$-element $\fundgroup{R,K}$-set $\Or(X)\coloneqq  \Bij(\set{n},X)/\A{n}$.
\end{theorem}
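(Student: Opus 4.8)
The plan is to dispose of the étale claim by a discriminant argument, and then to identify the resulting $2$-element $\pi$-set by reducing to the split algebra over $\Z$ and applying Galois descent.

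\textbf{Étale-ness.} Since $A$ is finite étale over $R$, its trace form is nondegenerate; equivalently, the discriminant bilinear form $\disc_A\colon(\extpower^n A)^{\otimes 2}\to R$ is an isomorphism of $R$-modules. The $2$-isomorphism packaged with the discriminant algebra $D$ supplies an isomorphism $\extpower^2 D(R,A)\cong\extpower^n A$ identifying $\disc_{D(R,A)}$ with $\disc_A$, so $\disc_{D(R,A)}$ is an isomorphism as well. As a rank-$2$ algebra is étale precisely when its discriminant form is nondegenerate (locally $D(R,A)=R[x]/(x^2-tx+m)$ has discriminant $t^2-4m$, a unit iff the algebra is étale), we conclude $D(R,A)$ is finite étale, hence corresponds to some $2$-element $\fundgroup{R,K}$-set $Y$; it remains to produce an isomorphism $Y\cong\Or(X)$.

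\textbf{The split algebra over $\Z$.} Consider $D(\Z,\Z^n)$. By the previous step it is a rank-$2$ étale $\Z$-algebra, hence split: $D(\Z,\Z^n)\cong\Z^2$ — either because $\Spec\Z$ has trivial fundamental group (Minkowski), or directly because a rank-$2$ étale $\Z$-algebra $\Z[x]/(x^2-bx-a)$ has discriminant $b^2+4a\in\{\pm1\}$, forcing $b^2+4a=1$ and hence that $x^2-bx-a$ splits over $\Z$. Functoriality of $D$ makes $\S{n}=\mathrm{Aut}_{\Z}(\Z^n)$ act on $D(\Z,\Z^n)=\Z^2$ by $\Z$-algebra automorphisms, i.e.\ through a homomorphism $\S{n}\to\mathrm{Aut}_{\Z\text{-alg}}(\Z^2)=\S{2}$. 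Since $\mathrm{Aut}_{\Z\text{-alg}}(\Z^2)$ acts faithfully on $\extpower^2\Z^2$ (the nontrivial automorphism by $-1$), and the discriminant-identifying isomorphism $\extpower^2 D(\Z,\Z^n)\cong\extpower^n\Z^n$ is natural — in particular $\S{n}$-equivariant — while $\S{n}$ acts on $\extpower^n\Z^n$ by the sign character, this homomorphism is exactly $\sgn\colon\S{n}\to\S{2}$. The same conclusion then holds over any ring by base change: $D(S,S^n)\cong S\otimes_\Z D(\Z,\Z^n)\cong S^2$, equivariantly for $\S{n}$.

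\textbf{Descent and comparison with $\Or(X)$.} Write $\pi=\fundgroup{R,K}$, choose an open normal $N\trianglelefteq\pi$ contained in the kernel of the action map $\pi\to\S{X}$, and let $S/R$ be the connected $G$-Galois cover with $G=\pi/N$. Then $S\otimes_R A\cong S^n$, with Galois descent datum the homomorphism $\rho\colon G\to\S{n}$ recording the $G$-action on $X$. Base-change compatibility of $D$ along $\Z\to R\to S$ and the previous step give an isomorphism $S\otimes_R D(R,A)\cong D(S,S^n)\cong S^2$ compatible with the semilinear $G$-actions; tracing through, $G$ acts on $S^2=S\otimes_\Z\Z^2$ via the Galois action on the $S$-factor twisted by $\sgn\circ\rho\colon G\to\S{2}$ on the $\Z^2$-factor. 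By étale descent $D(R,A)\cong(S\otimes_R D(R,A))^G$ is therefore the rank-$2$ étale $R$-algebra with descent datum $\sgn\circ\rho$, i.e.\ $Y$ is the $2$-element set $\S{n}/\A{n}$ with $\pi$ acting through $\pi\to G\xrightarrow{\sgn\circ\rho}\S{2}$. On the other hand, unwinding $\Or(X)=\Bij(\set{n},X)/\A{n}$: as a $\pi$-set $\Bij(\set{n},X)$ is an $\S{n}$-torsor on which $\pi$ acts through $\rho$, so its quotient by $\A{n}$ is $\S{n}/\A{n}$ with $\pi$ acting through $\sgn\circ\rho$ — the same $\fundgroup{R,K}$-set as $Y$, which completes the proof.

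\textbf{Main obstacle.} Conceptually the argument is short; the care is in the bookkeeping of the last step — matching the $G$-equivariant structure on $S\otimes_R D(R,A)$ coming from the descent datum of $A$ with the one coming from $D$'s functoriality under the automorphisms $\rho(g)$ of $S^n$, and checking that the handedness conventions in $\Bij(\set{n},X)/\A{n}$ come out as $\sgn\circ\rho$. The one genuine external input is the triviality of $\pi(\Spec\Z)$ (equivalently, the explicit classification of rank-$2$ étale $\Z$-algebras), which is what pins $D(\Z,\Z^n)$, and hence $D$ on all split étale algebras, down to the split rank-$2$ algebra with its sign action.
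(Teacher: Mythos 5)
Your proposal is correct and follows essentially the same route as the paper: both arguments deduce \'etale-ness from the nondegeneracy of the discriminant form, then pin down the restricted functor $\Et{n}\to\Et{2}$ by identifying $D(\Z,\Z^n)\cong\Z^2$ with its $\S{n}$-action forced to be the sign character by the equivariance of the discriminant-identifying isomorphism. The only difference is one of packaging: the paper phrases the classification abstractly as counting morphisms $\B{\S{n}}\to\B{\S{2}}$ and cites an earlier result for the identification with $\Or(X)$, whereas you unwind the same content explicitly via Galois descent along a splitting cover.
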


\begin{proof}
 A rank-$n$ $R$-algebra $A$ is \'etale if and only if its discriminant form $\disc_A\colon\extpower^n A\times\extpower^n A\to R$ is nondegenerate in the sense of inducing an isomorphism $\extpower^n A\to \Hom(\extpower^n A,R)$.
 Since $A$ and $D(R,A)$ have isomorphic discriminant forms, if the former is \'etale then so is the latter.
 Thus $D$ restricts to a functor $\Et{n}\to\Et{2}$, where $\Et{n}$ is the full subcategory of $\Alg{n}$ consisting of pairs $(R,A)$ with $A$ \'etale over $R$.
 
 Now $\Et{n}$ is equivalent to the fibered category $\B{\S{n}}$ of $\S{n}$-torsors, and we claim that there are only two morphisms of fibered categories $\B{\S{n}}\to \B{\S{2}}$ up to $2$-isomorphism.
 Indeed, such a morphism is determined up to $2$-isomorphism by an object of the fiber category $(\B{\S{2}})_\Z$ equipped with an action by $\S{n}$; the only such object is $\Z^2$, and the only such actions are the trivial action and the one coming from the sign homomorphism $\S{n}\to\S{2}$ composed with the action of $\S{2}$ on the factors of $\Z^2$.
 The trivial action corresponds to the assignment $(R,A)\mapsto (R, R^2)$, which does not admit a discriminant-identifying isomorphism of top exterior powers.
 Thus every discriminant algebra operation restricts to the same homomorphism $\Et{n}\to\Et{2}$, which by \cite[Theorem 4]{Bie15} is the one sending a rank-$n$ algebra corresponding to $X$ to the rank-$2$ algebra corresponding to $\Or(X)$.
\end{proof}

\section{Uniqueness in low rank}

In this section we will prove the following uniqueness theorem:

\begin{theorem}\label{uniqueness}
 For ranks $n\leq 3$, there is a unique discriminant algebra up to unique isomorphism.
\end{theorem}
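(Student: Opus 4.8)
\emph{Strategy.} My plan is to reduce everything to a single ``universal'' rank-$n$ algebra over a polynomial ring and pin the discriminant algebra down there using the already-established behaviour on \'etale algebras. Since the category of discriminant algebras is a groupoid, it is enough to show that $\discalg$ (from \cref{existence}) admits no automorphism but the identity and that every discriminant algebra is isomorphic to it. First I would dispose of $n\in\{0,1\}$: every rank-$\le1$ algebra is uniquely $(R,R)$, the fibres of $\Alg{n}\to\Aff$ are trivial, so a discriminant algebra is just one rank-$2$ $\Z$-algebra with $\extpower^2\cong\Z$ of discriminant $1$, of which there is exactly one (its ``trace element'' must be odd, well defined modulo $2$) with no exterior-square-fixing automorphism other than the identity. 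So assume $n\in\{2,3\}$.

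\emph{Universal algebra.} For $n=2$ I would take $\mathbf R=\Z[b,c]$, $\mathbf A=\mathbf R[x]/(x^2-bx+c)$; for $n=3$, $\mathbf R=\Z[a_0,a_1,a_2,a_3]$ with the universal binary cubic form and $\mathbf A$ the associated cubic algebra. In both cases $\mathbf R$ is a polynomial ring over $\Z$, every rank-$n$ algebra is Zariski-locally a pullback of $\mathbf A$ along a map out of $\mathbf R$ (for $n=2$ because $A/R\cong\extpower^2A$ is Zariski-locally trivial, for $n=3$ by the classical Delone--Faddeev parametrization of cubic rings by binary cubic forms), and the ambiguity in presenting a given algebra is governed by a flat affine group scheme $G$ acting on $\Spec\mathbf R$ ($G=\mathbb{G}_m\ltimes\mathbb{G}_a$ for $n=2$, $G=\mathrm{GL}_2$ acting twisted by the determinant for $n=3$). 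Consequently, by base change, gluing and naturality, a discriminant algebra $D$ is determined by the $G$-equivariant pair $\bigl(D(\mathbf R,\mathbf A),\ \extpower^2D(\mathbf R,\mathbf A)\xrightarrow{\sim}\extpower^n\mathbf A\bigr)$, and a morphism of discriminant algebras by its component at $(\mathbf R,\mathbf A)$. So I would reduce the theorem to: there is exactly one isomorphism $D(\mathbf R,\mathbf A)\xrightarrow{\sim}\discalg(\mathbf R,\mathbf A)$ of $\mathbf R$-algebras respecting the identifications of exterior squares.

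\emph{Comparison over $\mathbf R$.} Both $D(\mathbf R,\mathbf A)$ and $\discalg(\mathbf R,\mathbf A)$ are quadratic $\mathbf R$-algebras with the same free exterior square $L$ and the same discriminant form. An elementary computation shows that an exterior-square-preserving isomorphism between two quadratic algebras with exterior square $L$ is precisely a change of the module splitting $R\hookrightarrow(\,\cdot\,)\twoheadrightarrow L$, encoded by an element of $L^{\otimes-1}$; it exists iff the two ``trace elements'' agree modulo $2L^{\otimes-1}$, and then it is unique since the ambiguity is a torsor under $\{t\in L^{\otimes-1}:2t=0\}=0$ ($2$ being a nonzerodivisor in the domain $\mathbf R$). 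So I must show the trace elements $\ell_D,\ell_{\discalg}\in\mathbf R$ (after trivializing $L$) are congruent modulo $2\mathbf R$. Over the dense open $U=\Spec\mathbf R[1/\disc_{\mathbf A}]$, where $\mathbf A$ is \'etale over a connected ring, the description of discriminant algebras in the \'etale case (proved above) forces $D$ and $\discalg$ to agree canonically, whence $\ell_D-\ell_{\discalg}\in2\mathbf R[1/\disc_{\mathbf A}]$. Since $\mathbf R$ is a UFD in which $2$ is prime and does not divide $\disc_{\mathbf A}$ (the discriminant is a nonzero polynomial in the coefficients modulo $2$), we get $\mathbf R\cap2\mathbf R[1/\disc_{\mathbf A}]=2\mathbf R$, hence $\ell_D\equiv\ell_{\discalg}\pmod{2\mathbf R}$. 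The resulting unique isomorphism over $\mathbf R$ is then automatically $G$-equivariant (a coordinate change carries one such isomorphism to another, and over the relevant $\Z$-flat rings there is still only one), so it descends to a natural isomorphism $D\cong\discalg$; taking $D=\discalg$ shows $\discalg$ has no nontrivial automorphisms.

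\emph{Where $n\le3$ enters, and the hard part.} The essential input --- and the only place the bound is used --- is that for $n\le3$ the moduli of rank-$n$ algebras is covered Zariski-locally by a single family over a polynomial ring, which is regular, factorial, flat over $\Z$, and has $2$ as a nonzerodivisor not dividing the discriminant; this is what lets one fix the comparison generically (on the \'etale locus) and then spread it out across the discriminant divisor and propagate it by base change. I expect the main obstacle in carrying this out to be precisely that spreading-out step together with the bookkeeping of $G$-equivariance through it, and relatedly the characteristic-$2$ phenomenon that conjugation is exterior-square-preserving: the comparison genuinely fails to be unique over a base in which $2$ is a zerodivisor, and is rescued here only because everything is pulled back from the mixed-characteristic base $\Spec\mathbf R$. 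For $n\ge4$ the corresponding universal base is no longer a polynomial ring and the moduli can be reducible or nonreduced, so this route does not obviously survive.
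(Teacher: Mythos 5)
Your overall architecture is the paper's: reduce to a universal family over a polynomial ring over $\Z$ ($\Z[b,c]$ for $n=2$; the four-coefficient parametrization of cubic rings with a chosen basis of $A/R$ for $n=3$), show the comparison isomorphism is \emph{unique} there because $2$ is a nonzerodivisor (so the ambiguity $x\mapsto x+c$ with $2c=0$ is trivial), kill the dependence on the presentation by running the same uniqueness over the reparametrizing ring ($\mathbf R$ adjoin the coordinates of $\mathbb{G}_m\ltimes\mathbb{G}_a$ resp.\ $\mathrm{GL}_2$, again a localization of a polynomial ring over $\Z$), and then glue, getting naturality from uniqueness. Where you genuinely diverge is the \emph{existence} of the comparison over $\mathbf R$. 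The paper's \cref{disc-to-iso} gets this elementarily from the hypothesis that $2$ is a \emph{prime} nonzerodivisor: writing $k$ for the difference of the trace elements, the identification of discriminants gives $k^2=2sk-4t+4v$, so $2\mid k^2$ and hence $2\mid k$. You instead import existence from the \'etale classification over $U=\Spec\mathbf R[1/\disc_{\mathbf A}]$ and cross the discriminant divisor with the (correct) observations that $2\nmid\disc_{\mathbf A}$ and $\mathbf R\cap 2\mathbf R[1/\disc_{\mathbf A}]=2\mathbf R$ in the UFD $\mathbf R$.

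That step has a gap as written. The \'etale theorem gives you an abstract $\mathcal O(U)$-algebra isomorphism $D(U)\cong\discalg(U)$ (in fact exactly two, differing by conjugation), but to compare trace elements you need one that is compatible with the composite identification $\extpower^2D\cong\extpower^n\mathbf A\cong\extpower^2\discalg$, and the classification as proved only pins the functor down up to $2$-isomorphism, with no statement about the discriminant data. An arbitrary algebra isomorphism only yields $\ell_D=u\,\ell_{\discalg}$ for some unit $u\in\mathbf R[1/\disc_{\mathbf A}]^\times=\{\pm\disc_{\mathbf A}^k\}$, and this is not enough: for $n=2$ one has $\ell_{\discalg}\equiv b$ and $\disc_{\mathbf A}\equiv b^2\pmod 2$, so $u\,\ell_{\discalg}\equiv b^{2k+1}\not\equiv b$ unless $k=0$. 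You must therefore additionally argue that the isomorphism supplied by the classification induces $\pm1$ on exterior squares relative to the discriminant-data identification --- e.g.\ because a natural automorphism of the line bundle $\extpower^n$ on $\Et{n}$ is forced to be $\pm1$ by faithfully flat descent from the split case --- after which $\ell_D\equiv\pm\ell_{\discalg}\equiv\ell_{\discalg}\pmod 2$ and your argument closes. This is repairable, but it is a real missing step; the paper's \cref{disc-to-iso} sidesteps it entirely and never needs the \'etale classification.
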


We already have existence from \cref{existence}, so in the remainder of this section we will show that if $D$ and $D'$ are two discriminant algebras in rank $n\leq 3$, there is a unique isomorphism $D\cong D'$.

\subsection{Proof for $n\leq 1$}
In ranks $n=0$ and $n=1$, for each ring $R$ and rank-$n$ $R$-algebra $A$ there is a unique $R$-algebra isomorphism $A\cong R^n$, i.e.\ a unique morphism $(R,A)\to(\mb Z,\mb Z^n)$ in $\Alg{n}$.
Thus the fiber functor $\Alg{n}\to\Aff$ is an equivalence, and the category of discriminant algebras $\Alg{n}\to\Alg{2}$ is equivalent to the category of splittings $\Aff\to\Alg{2}$ sending each ring $R$ to a quadratic $R$-algebra with discriminant form isomorphic to multiplication $R^{\otimes 2}\simto R$.
Such a splitting is determined up to unique isomorphism by where it sends $\Z$, and the only \'etale quadratic $\Z$-algebra up to isomorphism is $\Z^2$, so $(R,A)\mapsto R^2$ is the unique discriminant algebra $\Alg{n}\to\Alg{2}$ up to unique isomorphism.

\subsection{A helpful lemma}

In higher ranks, not every algebra can be uniquely expressed as the base change of a universal algebra defined over $\Z$.
However, we can still show that over base rings similar to $\Z$, any two discriminant algebras must agree:

%

\begin{lemma}\label{disc-to-iso}
 Let $R$ be a ring in which $2$ is a unit or prime non-zerodivisor, and let $A$ and $B$ be quadratic $R$-algebras with an isomorphism $\extpower^2 A\cong \extpower^2 B$ identifying their discriminant bilinear forms $\disc_{A}$ and $\disc_{B}$. 
 Then there is a unique $R$-algebra isomorphism $A\cong B$ inducing the given isomorphism $\extpower^2 A\cong \extpower^2 B$.
\end{lemma}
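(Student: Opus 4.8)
The plan is to reduce to an explicit computation over a cover of $\Spec R$ trivializing the line bundles involved, and to patch the resulting local isomorphisms together using a soft uniqueness argument. Two preliminary facts about a quadratic $R$-algebra $A$, both checkable locally on $\Spec R$ (where $A\cong R[x]/(x^2-tx+n)$ for some $t,n\in R$): the $R$-linear map $\lambda_A\colon A\to\extpower^2 A$, $a\mapsto 1\wedge a$, is surjective with kernel exactly $R\cdot 1_A$; and $\trace_A(1_A)=2$. Since $\lambda$ is natural, any $R$-algebra homomorphism $\psi\colon A\to B$ satisfies $\lambda_B\circ\psi=\extpower^2\psi\circ\lambda_A$, so an isomorphism $\psi$ induces the given isomorphism $g\colon\extpower^2 A\xrightarrow{\sim}\extpower^2 B$ precisely when $\lambda_B\circ\psi=g\circ\lambda_A$. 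Finally, every $R$-algebra isomorphism preserves the trace form.

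\emph{Uniqueness.} If $\psi,\psi'\colon A\to B$ both induce $g$, then $\lambda_B\circ(\psi-\psi')=0$, so $\psi(a)-\psi'(a)\in\ker\lambda_B=R\cdot 1_B$ for each $a$; write $\psi(a)-\psi'(a)=D(a)\,1_B$ with $D\colon A\to R$ $R$-linear. Applying $\trace_B$ and using that $\psi,\psi'$ preserve the trace gives $2D(a)=\trace_B(D(a)1_B)=\trace_B(\psi(a)-\psi'(a))=0$ for all $a$. As $2$ is a non-zerodivisor, $D=0$, i.e.\ $\psi=\psi'$. This covers both hypotheses on $R$ simultaneously.

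\emph{Existence.} Both hypotheses on $R$ are inherited by every localization $R_f$ (a localization of the domain $R/2R$ is a domain, unless it vanishes, in which case $2$ is a unit in $R_f$). Hence we may pass to a cover of $\Spec R$ over which $\extpower^2 A$ — and therefore, via $g$, also $\extpower^2 B$ — is free and over which $A\cong R[x]/(x^2-tx+n)$ and $B\cong R[y]/(y^2-t'y+n')$. There $g$ sends $1\wedge x$ to $u\,(1\wedge y)$ for a unit $u\in R^\times$, and the condition that $g$ identify $\disc_A$ with $\disc_B$ becomes the single relation $u^2(t'^2-4n')=t^2-4n$. Reducing this modulo $2$ kills $4n$ and $4n'$, leaving $(ut')^2\equiv t^2\pmod{2R}$; since $R/2R$ is a domain in which $-1=1$ (or since $2$ is already a unit), the factorization $(t-ut')(t+ut')\equiv 0$ forces $t\equiv ut'\pmod{2R}$. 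So there is a unique $c\in R$ with $2c=t-ut'$, and we set $\psi(x)=uy+c$. Then $\psi$ is a well-defined $R$-algebra homomorphism: in $\psi(x)^2-t\,\psi(x)+n$ the coefficient of $y$ is $u(ut'+2c-t)=0$, while four times the constant coefficient is $(2c)^2-2t(2c)+4n-4u^2n'$, which after substituting $2c=t-ut'$ equals $u^2(t'^2-4n')-(t^2-4n)=0$, so (as $2$ is a non-zerodivisor) the constant coefficient vanishes too. The matrix $\mat{cc}{1 & c\\ 0 & u}$ passing from $\{1,y\}$ to $\{\psi(1),\psi(x)\}$ has determinant $u\in R^\times$, so $\psi$ is an isomorphism, and $\lambda_B(\psi(x))=u\,(1\wedge y)=g(\lambda_A(x))$, so $\psi$ induces $g$. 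By the uniqueness proved above — applied over the double overlaps, whose coordinate rings again satisfy the hypothesis — these local isomorphisms agree on overlaps and glue to the desired $R$-algebra isomorphism $A\cong B$ inducing $g$.

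\emph{Main obstacle.} The one substantive point is, in the existence step with $2$ not a unit, to see that the candidate $uy+\tfrac{t-ut'}{2}$ — a priori defined only over $R[1/2]$ — really has coordinates in $R$, i.e.\ that $t\equiv ut'\pmod{2R}$. This is the sole place where the hypothesis that $2$ is prime (rather than merely a non-zerodivisor) is used, through $R/2R$ being reduced; the statement genuinely fails over rings such as $\Z[s]/(s^2)$.
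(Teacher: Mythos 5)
Your proof is correct, and its core --- the existence step --- is essentially the paper's argument: localize to write $A\cong R[x]/(x^2-tx+n)$ and $B\cong R[y]/(y^2-t'y+n')$, use the discriminant identity together with the primality of $2$ to show the difference of traces lies in $2R$, and define the map by the resulting $c$. The paper extracts the divisibility from the explicit relation $k^2=2sk-4t+4v$ (so $k^2\in 2R$ forces $k\in 2R$), whereas you reduce the discriminant identity modulo $2$ and invoke reducedness of $R/2R$; these are interchangeable. Two genuine differences are worth noting. First, your uniqueness argument is global and coordinate-free: from $\ker(\lambda_B)=R\cdot 1_B$ and $\trace_B(1_B)=2$ you get $2(\psi-\psi')=0$ directly, without choosing generators, which is cleaner than the paper's local ``$s=u+2c$ has at most one solution'' and --- importantly --- is exactly what makes the descent/gluing step at the end automatic. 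The paper is terse about gluing (it just says ``work locally''), and your explicit appeal to uniqueness on overlaps is the right way to fill that in. Second, you carry the unit $u$ with $g(1\wedge x)=u(1\wedge y)$ rather than normalizing the generator of $B$ so that $u=1$; this is harmless and the computations check out (the coefficient of $y$ is $u(ut'+2c-t)=0$ and four times the constant term is $u^2(t'^2-4n')-(t^2-4n)=0$). Your closing remark that the statement fails over $\Z[s]/(s^2)$ is also correct (e.g.\ $R[x]/(x^2-sx)$ versus $R[y]/(y^2)$ both have discriminant $0$ but are not isomorphic compatibly), complementing the paper's $\Z[\sqrt5]$ example.
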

\begin{proof}
 Work locally, so that we can write $A\cong R[x]/(x^2-sx+t)$ and $B\cong R[y]/(y^2-uy+v)$, with generators chosen so that the isomorphism $\extpower^2 A\cong \extpower^2 B$ matches $1\wedge x$ with $1\wedge y$.
 (The property of $2$ being a unit or prime non-zerodivisor is preserved under localization.)
 Then we must show that there is a unique $R$-algebra isomorphism $A\to B$ of the form $x\mapsto y+c$ for some $c\in R$.
 If $x\mapsto y+c$ is to be an isomorphism, we must have $\trace_A(x) = \trace_B(y+c)$, or $s = u+2c$, so since $2$ is a non-zerodivisor there is at most one such $c$.
 
 For existence of a suitable $c$, note that the identification of discriminants gives us the equation $s^2-4t = u^2-4v$ in $R$.
 Let $k=s-u$, so that 
 \begin{align*}
  s^2-4t &= (s-k)^2-4v\\
  &= s^2 - 2sk + k^2 - 4v\\
  k^2 &= -4t + 2sk + 4v.
 \end{align*}
 Then $k^2$ is divisible by $2$, so $k$ must be as well because $2$ is either prime or a unit.
 Then letting $c$ be such that $k=2c$, we have obtained the desired $c$ for which $s=u+2c$.

 Now we show that $x\mapsto y+c$ is in fact an $R$-algebra isomorphism.  
 First, it is well-defined, since 
 \begin{align*}
  (y+c)^2 - s(y+c) + t &= y^2 + 2cy + c^2 - sy - sc + t\\
  &= (uy-v) + 2cy + c^2 - sy - sc + t\\
  &= (u-s+2c)y + (c^2 - sc + t-v).
 \end{align*}
 Now $s = u+2c$ by construction, so the coefficient of $y$ vanishes.
 But from the equation identifying discriminants, we also obtain
 \begin{align*}
  s^2 - 4t &= (s-2c)^2 - 4v \\
  &= s^2 - 4sc + 4c^2 - 4v\\
  0 &= 4(t-sc+c^2 - v).
 \end{align*}
 So since $2$ is a non-zerodivisor, we have $c^2-sc+t-v=0$ as well.
 Therefore the map $A\to B\colon x\mapsto y+c$ is well-defined, and it is bijective since the $R$-module basis $\{1,x\}$ for $A$ is carried to an $R$-module basis $\{1,y+c\}$ for $B$. 
\end{proof}


It is easy to find counterexamples to \cref{disc-to-iso} if we drop the hypotheses on $R$---for example, $\F{4}$ and $\F{2}^2$ have isomorphic discriminants over $\F{2}$---but note that it is not enough to assume that $2$ is a non-zerodivisor:
\begin{example}
 Let $R=\Z[\sqrt{5}]$, and define two quadratic $R$-algebras
 \[A \coloneqq  R[x]/(x^2-x-1),\qquad B \coloneqq  R[y]/(y^2-\sqrt{5}y).\]
 The isomorphism $\extpower^2 A \cong \extpower^2 B$ identifying basis elements $1\wedge x$ and $1\wedge y$ identifies the two discriminants: \[
 \disc_{A}(1\wedge x) = (-1)^2 - 4(1)(-1) = 5 = (-\sqrt{5})^2 - 4(1)(0) = \disc_{B}(1\wedge y).
\]
However, this isomorphism $\extpower^2 A\cong \extpower^2 B$ is not induced by an algebra isomorphism $A\cong B$, because such an isomorphism would have to send $x\mapsto y+c$ for some $c\in R$, and then we would have
\begin{align*}
 0 &= (y+c)^2 - (y+c) - 1\\
 &= y^2 + (2c-1)y + (c^2-c-1)\\
 &= (2c-\sqrt{5}-1)y + (c^2-c-1),
\end{align*}
but $1+\sqrt{5}$ is not divisible by $2$ in $R$, so no such $c$ can exist.
\end{example}


An immediate consequence of \cref{disc-to-iso} is the main result of this subsection:

\begin{corollary}\label{universal}
 Let $n$ be a natural number, and let $D,D'\colon \Alg{n}\to\Alg{2}$ be two discriminant algebras, and let $(R,A)\in\Alg{n}$ be such that $2$ is a unit or prime non-zerodivisor in $R$.
 Then there is a unique isomorphism $D(R,A)\cong D'(R,A)$ descending to the composite isomorphism $\extpower^2D(R,A)\cong \extpower^n A\cong \extpower^2 D'(R,A)$.
\end{corollary}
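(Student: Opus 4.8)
The plan is to deduce this immediately from \cref{disc-to-iso}, so the work is almost entirely bookkeeping. First I would note that, by \cref{discalg}, both $D(R,A)$ and $D'(R,A)$ are rank-$2$ (i.e.\ quadratic) $R$-algebras, and each comes equipped with a discriminant-identifying isomorphism of top exterior powers: $\extpower^2 D(R,A)\cong \extpower^n A$ and $\extpower^2 D'(R,A)\cong \extpower^n A$, each matching the relevant discriminant bilinear form with $\disc_A$. Composing the first isomorphism with the inverse of the second gives an isomorphism $\extpower^2 D(R,A)\cong \extpower^2 D'(R,A)$ that identifies $\disc_{D(R,A)}$ with $\disc_{D'(R,A)}$, since both are carried to $\disc_A$ along the way.

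Next I would apply \cref{disc-to-iso} with its ``$A$'' taken to be $D(R,A)$, its ``$B$'' taken to be $D'(R,A)$, and its isomorphism $\extpower^2 A\cong\extpower^2 B$ taken to be the composite just constructed; the hypothesis on $R$ (that $2$ is a unit or prime non-zerodivisor) is exactly what the corollary assumes. The lemma then yields a unique $R$-algebra isomorphism $D(R,A)\cong D'(R,A)$ inducing that composite isomorphism on top exterior powers, which is precisely the isomorphism asserted by the corollary, and whose uniqueness is part of the lemma's conclusion.

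I do not expect a genuine obstacle here. The only point requiring care is to confirm that the composite $\extpower^2 D(R,A)\cong \extpower^n A\cong \extpower^2 D'(R,A)$ really does identify the two discriminant forms, so that the full hypotheses of \cref{disc-to-iso} (and not merely an abstract isomorphism of line bundles) are met; this is immediate from the discriminant-identifying clause in the definition of a discriminant algebra. Consequently the written proof will be a single short paragraph.
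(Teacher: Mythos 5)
Your proposal is correct and matches the paper's treatment exactly: the paper presents \cref{universal} as an immediate consequence of \cref{disc-to-iso}, applied to the quadratic algebras $D(R,A)$ and $D'(R,A)$ with the composite discriminant-identifying isomorphism of their top exterior powers. Nothing further is needed.
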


\subsection{Proof for $n=2$}
The proofs for $n=2$ and $n=3$ will both proceed as follows: first we identify a suitably universal case in which \cref{universal} applies, and then we use base changes to get a choice-free natural isomorphism $D\cong D'$.

For the case $n=2$ we are showing that every endomorphism of $\Alg{2}$ over $\cat{Disc}$, as a morphism of CFGs over $\Aff$, is uniquely isomorphic to every other (e.g.\ the identity).

Indeed, let $D\colon\Alg{2} \to \Alg{2}$ be a discriminant algebra.
Consider the quadratic algebra $(R_1, A_1)\in\Alg{2}$ given by
\[R_1 = \mb Z[s_1,t_1],\quad A_1 = R_1[x_1]/(x_1^2-s_1x_1+t_1).\]
Since $2\in R_1$ is a prime non-zerodivisor, \cref{universal} applies and we find that there is a unique isomorphism $D(R_1,A_1)\cong A_1$ agreeing with the isomorphism we already have on their exterior powers.

Furthermore, morphisms to $(R_1,A_1)$ in $\Alg{2}$ have a nice interpretation: if $R$ is any ring and $A$ is a quadratic $R$-algebra, then a map $(R,A)\to(R_1,A_1)$ corresponds to a ring homomorphism $R_1\to R$ (i.e.\ a pair of elements $s,t\in R$) and an isomorphism $R\otimes_{R_1} A_1 \cong A$ (i.e.\ an $R$-algebra generator $x\in A$ such that $x^2-sx+t=0$).
So each choice of $R$-algebra generator $x\in A$ corresponds to a morphism $f_x\colon (R,A)\to(R_1,A_1)$.

Now given a choice of $R$-algebra generator $x$ of $A$, we obtain an isomorphism $D(R,A)\cong A$ via base change along $f_x$ from the canonical isomorphism $D(R_1,A_1)\cong A_1$.  
However, this isomorphism $D(R,A)\cong A$ may in principle depend on our choice of generator $x\in A$.

To show that there is no such dependence, we introduce a ring and algebra just slightly bigger than $(R_1,A_1)$.
Define
\[R_2 = R_1[r_0,u_0,u_0^{-1}],\quad A_2 = R_2[x_1]/(x_1^2-s_1x_1+t_1).\]
We distinguish two separate algebra generators for $A_2$ over $R_2$: $x_1$ by itself does the job, and so does $x_2\coloneqq u_0x_1+r_0$, since $u_0$ is a unit in $R_2$. 
We thus obtain two morphisms $(R_2,A_2)\rightrightarrows (R_1,A_1)$ corresponding to $x_1$ and $x_2$; call these morphisms $\pi_1$ and $\pi_2$.

\begin{lemma}\label{generator-change}
 The two morphisms $\pi_1,\pi_2\colon(R_2,A_2)\rightrightarrows (R_1,A_1)$ express $(R_2,A_2)$ as the categorical product of $(R_1,A_1)$ with itself in $\Alg{2}$.
 In other words, given any two morphisms $f,g\colon (R,A)\rightrightarrows(R_1,A_1)$, they factor uniquely as a single map $(f,g)\colon (R,A)\to(R_2,A_2)$ followed by $\pi_1,\pi_2\colon(R_2,A_2)\rightrightarrows (R_1,A_1)$.
\end{lemma}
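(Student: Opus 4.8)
The plan is to translate everything into the language of algebra generators already set up just before the lemma. Recall that a morphism $(R,A)\to(R_1,A_1)$ in $\Alg{2}$ is the same datum as a choice of $R$-algebra generator of $A$, namely the image $x$ of $x_1$; the underlying ring map $R_1\to R$ is then forced to send $s_1\mapsto\trace_A(x)$ and $t_1\mapsto\norm_A(x)$. Write $f_x$ for the morphism attached to a generator $x$, so that $\pi_1=f_{x_1}$ and $\pi_2=f_{x_2}$ with $x_2=u_0x_1+r_0$. Likewise, unwinding the definition, a morphism $h\colon(R,A)\to(R_2,A_2)$ consists of a ring map $R_2\to R$ — that is, a quadruple $(s,t,r,u)\in R\times R\times R\times R^\times$ — together with an $R$-algebra isomorphism $R[x_1]/(x_1^2-sx_1+t)\cong A$, i.e.\ an $R$-algebra generator $y\in A$ with $y^2-sy+t=0$; since $y$ already forces $s=\trace_A(y)$ and $t=\norm_A(y)$, such an $h$ is equivalently a triple $(y,u,r)$ with $y$ a generator of $A$, $u\in R^\times$, and $r\in R$.

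Next I would compute the composites $\pi_1\circ h$ and $\pi_2\circ h$ for $h\leftrightarrow(y,u,r)$. Chasing the canonical generator $x_1$ of $A_1$ through the base-change isomorphisms shows that $\pi_1\circ h=f_y$: the generator $x_1$ of $A_1$ is pulled back by $\pi_1$ to $x_1\in A_2$, and then by $h$ to $y\in A$. Similarly $\pi_2\circ h=f_{uy+r}$: now $x_1$ is pulled back by $\pi_2$ to $x_2=u_0x_1+r_0\in A_2$, and then by $h$ to $uy+r\in A$, since $h$ sends $u_0\mapsto u$ and $r_0\mapsto r$ on the level of rings. Therefore, given $f,g\colon(R,A)\rightrightarrows(R_1,A_1)$ corresponding to generators $x$ and $x'$ of $A$, a factorization $h\leftrightarrow(y,u,r)$ with $\pi_1\circ h=f$ and $\pi_2\circ h=g$ is precisely a solution of the system $y=x$ and $uy+r=x'$; in other words $y=x$ is forced, and $(u,r)$ must satisfy $ux+r=x'$ with $u$ a unit.

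The only remaining point — and the mathematical heart of the lemma — is the elementary observation that any two $R$-algebra generators $x,x'$ of a quadratic $R$-algebra $A$ are related by a \emph{unique} equation $x'=ux+r$ with $u\in R^\times$ and $r\in R$: both $\{1,x\}$ and $\{1,x'\}$ are $R$-module bases of $A$, so expanding $x'$ in the basis $\{1,x\}$ produces a change-of-basis matrix $\mat{cc}{1 & r\\ 0 & u}$, whose determinant $u$ is consequently a unit, and $u,r$ are visibly unique. This yields the unique admissible triple $(y,u,r)=(x,u,r)$, hence the unique factorization $h$, and one checks with no effort that the $h$ so produced is indeed a morphism of $\Alg{2}$ (the relation $y^2-\trace_A(y)y+\norm_A(y)=0$ is automatic and $y=x$ is a generator). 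The fiddliest step, though not a genuine obstacle, is keeping straight the base-change isomorphisms when forming $\pi_1\circ h$ and $\pi_2\circ h$; everything else is immediate.
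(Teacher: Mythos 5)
Your proposal is correct and follows essentially the same route as the paper: both identify morphisms $(R,A)\to(R_2,A_2)$ with pairs of $R$-algebra generators of $A$ via the observation that any two generators $x,x'$ satisfy a unique relation $x'=ux+r$ with $u\in R^\times$ (since both descend to free generators of the rank-$1$ module $A/R$). Your version merely adds the explicit verification that $\pi_1\circ h=f_y$ and $\pi_2\circ h=f_{uy+r}$, which the paper leaves implicit.
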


\begin{proof}
 We show that maps $(R,A)\to (R_2,A_2)$ correspond to pairs of single generators for $A$ as an $R$-algebra.
 Indeed, given any two generators $x,y$ for $A$ over $R$, they must each be a free $R$-module generator for the free rank-$1$ module $A/R$, so there is a unique unit $u\in R^\times$ for which $y\equiv ux$ modulo $R$.
 Then there is also a unique element $r\in R$ such that $y = ux+r$.
 So the data of two single generators $x$ and $y$ for $A$ is equivalent to the data of a single generator $x$ along with an arbitrary element $r$ and unit $u$.
 But this is exactly the data captured by a morphism $(R,A)\to(R_2,A_2)$.
\end{proof}

Next we note that since $R_2$ is the localization of a polynomial ring over $\mb Z$, \cref{universal} applies, and we obtain a unique isomorphism $D(R_2,A_2)\cong A_2$ agreeing with the identification of their determinant bundles.  
In particular, the two base changes of the canonical isomorphism $D(R_1,A_1)\cong A_1$ via the underlying ring homomorphisms of $\pi_1$ and $\pi_2$ give the same isomorphism $D(R_2,A_2)\cong A_2$.

Then given any two morphisms $f,g\colon(R,A)\rightrightarrows (R_1,A_1)$ in $\Alg{2}$, we may use each of $f$ and $g$ to define an isomorphism $D(R,A)\cong A$, by base changing the canonical isomorphism $D(R_1,A_1)\cong A_1$ along the underlying ring homomorphisms of $f$ and $g$.
By the lemma, these base changes can also be accomplished by first base changing along $\pi_1$ or $\pi_2$ to an isomorphism $D(R_2,A_2)\cong A_2$, and then base changing along $(f,g)\colon (R,A)\to (R_2,A_2)$.
But then the resulting two isomorphisms $D(R,A)\cong A$ are equal, since they are the base changes of the same isomorphism $D(R_2,A_2)\cong A_2$.
Thus any two choices of $R$-algebra generator for $A$ give rise to the same isomorphism between $D(R,A)\cong A$.

Then for an arbitrary (not necessarily monogenic) quadratic $R$-algebra $A$, we may define a choice-free isomorphism $D(R,A)\simto A$ by building it locally, and then gluing.

This completes the proof: We know that there is just one possible choice of isomorphism $D(R_1,A_1)\cong A_1$, and that for a monogenic quadratic $R$-algebra $A$, there is exactly one isomorphism $D(R,A)\cong A$ that is compatible with the base changes from every $(R,A)\to (R_1,A_1)$.
Furthermore, for a general quadratic $R$-algebra $A$, there is exactly one isomorphism $D(R,A)\cong A$ that is compatible with the base changes to every $(R_r, A_r)\to(R,A)$ for which $(R_r,A_r)$ is monogenic.  
Naturality of this isomorphism in general then follows from uniqueness.

\subsection{Proof for $n=3$}
We use the same approach for rank-$3$ algebras as for those of rank $n=2$: identify a cubic algebra over a simple enough ring that every cubic algebra is locally a base change of it.
Not every cubic algebra is locally monogenic, but it does locally have a basis containing its multiplicative identity; in other words, if $A$ is a cubic $R$-algebra then $A/R$ is a locally free $R$-module of rank $2$.
(See, for example, \cite[Lemma 2.3]{Bie15} for a proof that if $A$ is a rank-$n$ $R$-algebra then $A/R$ is locally free of rank $n-1$.)

We thus consider the structure of a cubic $R$-algebra $A$ for which the $R$-module $\dot A \coloneqq A/R$ is free of rank $2$ as an $R$-module.
Each choice $\{\dot x, \dot y\}$ of basis for $\dot A$ lifts uniquely to a basis $\{1,x,y\}$ of $A$ for which $xy\in R$. 
Then associativity requires that multiplication in $A$ be of the following form, for some $a,b,c,d\in R$:
\begin{equation}\label{cubiclaw}
\begin{aligned}
 xy &= bc \\
 x^2 &= -bd + ax + by\\
 y^2 &= -bc + cx + dy,
 \end{aligned}
\end{equation}
and conversely, any choice of $a,b,c,d\in R$ gives rise to a cubic $R$-algebra structure on $R\langle 1,x,y\rangle$ with multiplication defined by \eqref{cubiclaw}.
(See the proof of Proposition 4.2 in \cite{Gan02}, which is given over $\Z$ but works over any base ring.)

So now define an object $(R_1,A_1)$ of $\Alg{3}$ by $R_1 = \mb Z[a_1,b_1,c_1,d_1]$ and
\begin{align*}
 A_1 = R_1[x_1,y_1]/\bigl(x_1y_1&-(b_1c_1),\\(x_1)^2&-(-b_1d_1+a_1x_1+b_1y_1),\\(y_1)^2&-(-b_1c_1+c_1x_1+d_1y_1)\bigr).
\end{align*}
Morphisms $(R,A)\to(R_1,A_1)$ in $\Alg{3}$ correspond bijectively to choices of $R$-module basis for $A/R$.
And since $2$ is a prime non-zerodivisor of $R_1$, given any two discriminant algebras $D,D'\colon \Alg{3}\to\Alg{2}$ there is exactly one $R_1$-algebra isomorphism $D(R_1,A_1)\cong D'(R_1,A_1)$ that induces the composite isomorphism $\extpower^2D(R_1,A_1)\cong\extpower^3A_1\cong \extpower^2 D'(R_1,A_1)$.
So for each $R$-module basis of $\dot A$, we obtain by base change an isomorphism $D(A,R)\cong D'(A,R)$.

The proof now continues as in the $n = 2$ case: we will
prove that this isomorphism is independent of choice of basis by considering two bases $\{\dot x, \dot y\}$ and $\{\dot x',\dot y'\}$ for $\dot A$.
In this case there is a matrix $(\begin{smallmatrix}e & f\\g & h\end{smallmatrix})$ in $\mathop{GL}_2(R)$ such that
\[\begin{pmatrix} \bar x' \\ \bar y'
\end{pmatrix} = \begin{pmatrix}e & f\\g & h
\end{pmatrix}\begin{pmatrix}\bar x\\\bar y
\end{pmatrix}.
\]
Then we expand $(R_1,A_1)$ to a new ring $(R_2,A_2)$ by setting
\[R_2 = R_1\left[e_0, f_0, g_0, h_0, (e_0h_0-f_0g_0)^{-1}\right]\]
and $A_2 = R_2\otimes_{R_1}A_1$.
We again have two distinguished maps from $(R_2,A_2)$ to $(R_1,A_1)$, corresponding to the two $R_2$-module bases $(\bar x_1,\bar y_1)$ and $(\bar x_2,\bar y_2)\coloneqq (e_0\bar x_1+f_0\bar y_1, g_0\bar x_1 + h_0\bar y_1)$ for $A_2/R_2$.
And once again, $(R_2, A_2)$ is the categorical product of $(R_1,A_1)$ with itself in $\Alg{3}$, by a simple check that maps $(R,A)\to (R_2,A_2)$ correspond to pairs of bases for $\dot A$.
And since $R_2$ is again a localization of a polynomial ring over $\mb Z$, \cref{universal} applies, and the same argument as in the $n=2$ case shows that if $A/R$ is free then the isomorphism $D(R,A)\cong D'(R,A)$ is independent of choice of basis.  
Then we can glue these isomorphisms to obtain a unique isomorphism $D(R,A)\cong D'(R,A)$ for arbitrary $A$ cubic over arbitrary $R$; the uniqueness guarantees naturality.

One might expect to be able to apply the same type of argument to arbitrary rank $n$, but it is not clear how to find the analogues of $R_1$ and $A_1$ for which $2$ is a unit or prime non-zerodivisor in $R_1$, and for which every rank-$n$ algebra is locally a base change of $A_1$.
Melanie Matchett Wood's parameterization of quartic algebras \cite{Wood12} or a similar generalization of Manjul Bhargava's parametrizations of quintic $\Z$-algebras \cite{BhargThIII} may allow such an approach to work for $n$ up to $5$, but each higher rank would be a hard-won battle without a more general approach.

\section{Isomorphism with Rost's discriminant algebra}

As a consequence of \cref{uniqueness}, there is a canonical isomorphism between the discriminant algebra $\discalg$ of \cref{existence} and any other for rank $3$.
In \cite{Rost}, Markus Rost defines such a discriminant algebra for rank $3$, and we exhibit here the resulting isomorphism with $\discalg$ as a simpler version of the analysis for Loos's discriminant algebra to follow.


We begin by reviewing the definition of Rost's discriminant algebra from \cite{Rost}.

\begin{definition}
Let $R$ be a ring and let $A$ be a rank-$3$ $R$-algebra. 
Define an $R$-module $K(A)$ as follows: First let $\dot A$ be the quotient $R$-module $A/R$.
Because $A$ has rank $3$, the function
\[
A\to R: a\mapsto s_1(a^2) - s_2(a)
\]
depends only on $\dot a$, the equivalence class of $a$ modulo $R$, and thus defines a quadratic form $q_A$ on $\dot A$.

Let $C(q_A)$ be the Clifford algebra of $q_A$, the quotient of the tensor algebra $\bigoplus_{n=0}^\infty \dot A^{\otimes n}$ by the two-sided ideal generated by elements of the form $\dot a \otimes \dot a - q_A(\dot a)$.
The Clifford algebra retains a $\Z/2\Z$-grading; we let $K(A) = C_0(q_A)$ be the even-graded part.
\end{definition}

\begin{remark}
Given any quadratic form $q$ on a locally-free rank-$2$ $R$-module $M$, the even Clifford algebra $C_0(q)$ is also locally free of rank $2$: 
the construction commutes with localization, and if $M$ has $R$-basis $\{\theta_1,\theta_2\}$, then $C_0(q)$ has basis $\{1, \theta_1\otimes\theta_2\}$, since $\theta_2\otimes\theta_1 = - \theta_1\otimes \theta_2 + q(\theta_1 + \theta_2) - q(\theta_1) - q(\theta_2).$ 
Thus we can present $C_0(q)$ as the quotient of $R\oplus M^{\otimes 2}$ by the $R$-submodule generated by elements of the form $m \otimes m - q(m)$.
In particular, the ring homomorphism $R\to C_0(q)$ fits into a short exact sequence $0 \to R \to C_0(q) \to \extpower^2 M \to 0$ of $R$-modules, where the right-hand map sends each $m_1\otimes m_2\mapsto m_1\wedge m_2$.
Thus, in case $A$ is a rank-$3$ $R$-algebra, we have (via the isomorphism $\extpower^2 \dot A \to \extpower^3 A$ sending $\dot a\wedge \dot b\mapsto 1\wedge a\wedge b$) a short exact sequence of $R$-modules
\[
0 \to R \to K(A) \to \extpower^3 A\to 0
\]
and $K(A)$ is a quadratic $R$-algebra.
\end{remark}


\begin{lemma}\label{Rost-module-iso}
Let $R$ be a ring and let $A$ be a rank-$3$ $R$-algebra. 
There is a unique $R$-module homomorphism $K(A)\to \discalg(A)$ sending $1$ to $1$ and $\dot a \otimes \dot b$ to $s_1(ab)-\gamma(1, b, a)$. 
This homomorphism is a morphism of extensions of $\extpower^3 A$ by $R$, and thus an $R$-module isomorphism.
\end{lemma}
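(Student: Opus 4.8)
The plan is to build the map using the presentation of $K(A)$ recorded in the preceding remark: $K(A)$ is the quotient of $R\oplus\dot A^{\otimes 2}$ by the $R$-submodule generated by the elements $\dot a\otimes\dot a-q_A(\dot a)$. So it suffices to produce an $R$-linear map $R\oplus\dot A^{\otimes 2}\to\discalg(A)$ sending $1\mapsto 1$ and $\dot a\otimes\dot b\mapsto s_1(ab)-\gamma(1,b,a)$ and killing those generators; uniqueness of the resulting map $K(A)\to\discalg(A)$ is then automatic, since $1$ together with the simple tensors $\dot a\otimes\dot b$ generate $K(A)$ as an $R$-module.

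First I would check that $(a,b)\mapsto s_1(ab)-\gamma(1,b,a)$ is a well-defined $R$-bilinear map $\dot A\times\dot A\to\discalg(A)$. It is visibly $R$-multilinear in $(a,b)$ before passing to the quotient $\dot A$, so the only point is that it is unchanged when $a$ is replaced by $a+r$ or $b$ by $b+r$ for $r\in R$. For this one computes $\gamma(1,b,a+r)=\gamma(1,b,a)+r\cdot e_1(b)$ in $\discalg(A)$, and since $e_1(b)$ is $\S{3}$-invariant it equals $\ferrand_{A/R}(e_1(b))=s_1(b)$ there; as $s_1((a+r)b)=s_1(ab)+r\,s_1(b)$ as well, the difference $s_1(ab)-\gamma(1,b,a)$ does not change, and similarly in $b$. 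This produces an $R$-linear map $\dot A^{\otimes 2}\to\discalg(A)$, which together with $1\mapsto 1$ gives the desired map out of $R\oplus\dot A^{\otimes 2}$. To see it kills the relations I must show $s_1(a^2)-\gamma(1,a,a)=q_A(\dot a)=s_1(a^2)-s_2(a)$, i.e.\ $\gamma(1,a,a)=s_2(a)$ in $\discalg(A)$; but $\gamma(1,a,a)$ has two equal arguments, hence is the image of the $\S{3}$-invariant element $\sum_{\sigma\in\A{3}}\conjugate{1}{\sigma(1)}\conjugate{a}{\sigma(2)}\conjugate{a}{\sigma(3)}=e_2(a)$, which $\discalg(A)$ identifies with $\ferrand_{A/R}(e_2(a))=s_2(a)$. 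Hence the map descends to $K(A)$, giving the (unique) homomorphism of the statement.

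It remains to see it is a morphism of the two extensions of $\extpower^3 A$ by $R$ --- namely $0\to R\to K(A)\to\extpower^3 A\to 0$ from the remark, and the analogous $0\to R\to\discalg(A)\to\extpower^3 A\to 0$ arising from the quadratic-algebra structure of $\discalg(A)$ (its second map being $z\mapsto 1\wedge z$ followed by the discriminant-identifying isomorphism $\extpower^2\discalg(A)\cong\extpower^3 A$). The square over the subobject $R$ commutes because $1\mapsto 1$. For the square over $\extpower^3 A$, chase a generator $\dot a\otimes\dot b$ of $K(A)$: it maps down to $s_1(ab)-\gamma(1,b,a)$ and thence to $1\wedge\bigl(s_1(ab)-\gamma(1,b,a)\bigr)=-1\wedge\gamma(1,b,a)$ (using $1\wedge 1=0$), which the discriminant-identifying isomorphism carries to $-1\wedge b\wedge a=1\wedge a\wedge b$; this agrees with the image $\dot a\wedge\dot b\mapsto 1\wedge a\wedge b$ of $\dot a\otimes\dot b$ under the quotient map $K(A)\to\extpower^3 A$. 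Being a map of short exact sequences inducing the identity on both ends, the homomorphism is an isomorphism by the short five lemma. The only delicate points are the symmetrization identities $\gamma(1,b,a+r)-\gamma(1,b,a)=r\,e_1(b)$ and $\gamma(1,a,a)=e_2(a)$ --- i.e.\ correctly expanding the $\A{3}$-sum defining $\gamma$ --- and keeping the sign straight in the final square; everything else is formal.
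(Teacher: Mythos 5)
Your proposal is correct and follows essentially the same route as the paper: define the bilinear map via $(a,b)\mapsto s_1(ab)-\gamma(1,b,a)$, check it descends to $\dot A^{\otimes 2}$, verify $\gamma(1,a,a)=s_2(a)$ so the Clifford relations are killed, and conclude via the map of short exact sequences. The only cosmetic difference is that you check translation-invariance under $a\mapsto a+r$ where the paper equivalently checks that $(a,r)$ and $(r,b)$ map to zero, and you spell out the $\A{3}$-expansions $\gamma(1,b,1)=e_1(b)$ and $\gamma(1,a,a)=e_2(a)$ that the paper leaves implicit.
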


\begin{proof}
Uniqueness is guaranteed because such elements generate $K(A)$ as an $R$-module.
 As for existence, the map $A\times A\to \discalg(A)$ sending $(a,b)$ to $s_1(ab)-\gamma(1, b, a)$ is $R$-bilinear, and sends elements of the form $(a,r)$ to $s_1(ar) - \gamma(1,r,a) = r s_1(a) - r \gamma(1,1,a) = 0$, and similarly sends elements of the form $(r,b)\mapsto 0$.
 Thus this function extends and descends to an $R$-module homomorphism $\dot A \otimes \dot A\to \discalg(A)$.  
 Together with the assignment $1\mapsto 1$, we obtain an $R$-linear homomorphism $R\oplus \dot A^{\otimes 2}\to \discalg(A)$.  
 This homomorphism sends elements of the form $\dot a \otimes \dot a$ to $s_1(a^2) - \gamma(1,a,a) = s_1(a^2) - s_2(a) = q(\dot a)$, so it descends to a homomorphism $K(A)\to \discalg(A)$ of
$R$-modules. 
 As suggested in the statement, this homomorphism fits into a map of short exact sequences:
\[
\xymatrix{
0 \ar[r] & R \ar[r]\ar@{=}[d] & K(A)\ar[r]\ar[d] & \extpower^3 A\ar@{=}[d] \ar[r] & 0\\
0 \ar[r] & R \ar[r] & \discalg(A)\ar[r] & \extpower^3 A \ar[r] & 0
}
\]
Commutativity of the left-hand square follows from the fact that $K(A)\to \discalg(A)$ sends $1\mapsto 1$; 
commutativity of the right-hand square can be checked on a general element of the form $\dot a\otimes \dot b$, which is sent to $1\wedge a\wedge b$ in $\extpower^3 A$.  
Its image in $\discalg(A)$ is $s_1(ab) - \gamma(1,b,a)$, which is sent to $0 - 1\wedge b\wedge a = 1\wedge a\wedge b$ as desired. 
Thus this homomorphism is an $R$-module isomorphism $K(A)\simto \discalg(A)$.
\end{proof}


Thus we have two quadratic $R$-algebras with a canonical isomorphism between their underlying modules.
However, this module isomorphism is not generally an $R$-algebra isomorphism.  
In fact, the isomorphism $\extpower^2 K(A) \cong K(A)/R \cong \extpower^3 A$ does not even identify the two discriminant forms on $K(A)$ and $\extpower^3 A$.  
Hence, Rost continues the definition of his discriminant algebra by ``shifting'' the multiplication on $K(A)$ so that the isomorphism $\extpower^2 K(A)\cong \extpower^3 A$ identifies the discriminant forms; we review this process and then show that the resulting algebra is isomorphic to $\Delta(A)$.

\begin{definition}
Let $R$ be a ring, let $B$ be a quadratic $R$-algebra, and let $\dot B = B/R$ be the quotient $R$-module.
Given a bilinear form $\eps\colon  \dot B\times\dot B \to R$, we may define a new multiplication $\star$ on $B$ in the following way:
\[
b_1\star b_2 = b_1b_2 - \eps(\dot b_1, \dot b_2).
\]
This new multiplication has the same two-sided identity element as the original multiplication on $B$, so it is automatically commutative and associative and defines a new $R$-algebra structure on the $R$-module $B$. 
This new $R$-algebra is called {\em the shift of $B$ by $\eps$} and denoted $B+\eps$.
\end{definition}

\begin{remark}
In \cite{Rost}, the shift of $B$ by $\eps$ is defined so that the new product of $b_1$ and $b_2$ is $b_1b_2+\eps(\dot b_1,\dot b_2)$.  
We instead follow the usage by Loos in \cite{LoosDiscAlg}, which is defined so that traces, norms, and discriminants transform as follows:
\begin{align*}
\trace_{B+\eps}(b) &= \trace_B(b)\\
\norm_{B+\eps}(b) &= \norm_B(b) + \eps(\dot b,\dot b)\\
\disc_{B+\eps}(1\wedge b_1,1\wedge b_2) &= \disc_{B}(1\wedge b_1,1\wedge b_2) - 4\eps(\dot b_1,\dot b_2). \end{align*}
\end{remark}


\begin{definition}
Let $R$ be a ring and let $A$ be a rank-$3$ $R$-algebra. 
Then the discriminant bilinear form $\disc_{A}$ on $\extpower^3 A$ may be regarded as a bilinear form on $K(A)/R$.
Then the {\em Rost discriminant algebra of $A$} is $D(A) := K(A)-{\disc_{A}}$, the shift of $K(A)$ by $-\disc_{A}$.
\end{definition}

Then the $R$-module isomorphism between $K(A)$ and $\discalg(A)$ becomes an $R$-algebra isomorphism between $D(A)$ and $\discalg(A)$.

\begin{theorem}\label{norm-in-delta-is-equal-to-Rost}
Let $R$ be a ring and let $A$ be a cubic $R$-algebra.  Then the $R$-module isomorphism $D(A) = K(A)\to \discalg(A)$ sending $1$ to $1$ and $\dot{a}\otimes\dot{b}$ to $s_1(ab)-\gamma(1,b,a)$ is an $R$-algebra isomorphism $D(A)\simto \discalg(A)$.
\end{theorem}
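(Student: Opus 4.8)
Write $\phi\colon D(A)=K(A)\to\discalg(A)$ for the map in the statement. By \cref{Rost-module-iso} it is an $R$-module isomorphism and a morphism of the extensions $0\to R\to K(A)\to\extpower^3 A\to 0$ and $0\to R\to\discalg(A)\to\extpower^3 A\to 0$ inducing the identity on $\extpower^3 A$; in particular $\phi(1)=1$. So the only thing to prove is that $\phi$ is multiplicative, and the plan is to reduce this --- after passing to a universal cubic algebra --- to a single identity between traces, thereby avoiding ever having to multiply out a product of two $\gamma$'s.

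\emph{Reduction to the universal cubic algebra.} Multiplicativity of $\phi$ amounts to the vanishing of the $R$-linear map $D(A)\otimes_R D(A)\to\discalg(A)$, $u\otimes v\mapsto\phi(uv)-\phi(u)\phi(v)$, which is a local condition on $\Spec R$; so I may localize and assume $\dot A=A/R$ is free of rank $2$, with basis $\{\dot x,\dot y\}$. By the remark before \cref{Rost-module-iso}, $D(A)$ then has $R$-basis $\{1,\theta\}$ with $\theta\coloneqq\dot x\otimes\dot y$, and $\discalg(A)$ has $R$-basis $\{1,\phi(\theta)\}$. A choice of basis of $\dot A$ is the same datum as a morphism $(R,A)\to(R_1,A_1)$ in $\Alg{3}$ to the universal cubic algebra of the proof of \cref{uniqueness}, and the formula defining $\phi$ is compatible with base change, because its constituents --- the even Clifford algebra, the shift construction, $\discalg$ (by Theorem 6.1 of \cite{Bie15}), and the operations $s_k$ and $\gamma$ --- each are. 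Hence $\phi_{(R,A)}$ is the base change of $\phi_{(R_1,A_1)}$, and it suffices to treat $(R_1,A_1)$, where $R_1=\Z[a_1,b_1,c_1,d_1]$ is an integral domain in which $4\neq 0$.

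\emph{Reduction to trace.} Over $R_1$, since $\{1,\theta\}$ is a basis and $\phi$ is $R_1$-linear with $\phi(1)=1$, the map $\phi$ is an $R_1$-algebra homomorphism --- hence, being a module isomorphism, an isomorphism --- exactly when $\phi(\theta^2)=\phi(\theta)^2$; writing $\theta^2$ and $\phi(\theta)^2$ via the minimal polynomials of $\theta$ and $\phi(\theta)$, this is equivalent to $\trace_{D(A_1)}(\theta)=\trace_{\discalg(A_1)}(\phi(\theta))$ and $\norm_{D(A_1)}(\theta)=\norm_{\discalg(A_1)}(\phi(\theta))$. The discriminants match automatically: $\extpower^2\phi$ carries $1\wedge\theta$ to $1\wedge\phi(\theta)$ and, being induced by a morphism of extensions inducing the identity on $\extpower^3 A_1$, carries the discriminant form of $D(A_1)$ to that of $\discalg(A_1)$ --- the former transporting to $\disc_{A_1}$ because Rost's shift by $-\disc_A$ is designed so that $\disc_{D(A)}=\disc_A$ (concretely, this is the identity $\disc_{K(A)}=-3\disc_A$, which one verifies over $(R_1,A_1)$), and the latter transporting to $\disc_{A_1}$ because $\discalg$ is a discriminant algebra (\cref{existence}). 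Since $\disc_B(1\wedge\eta,1\wedge\eta)=\trace_B(\eta)^2-4\norm_B(\eta)$ for a quadratic algebra $B$ with basis $\{1,\eta\}$, equality of discriminants plus equality of traces forces equality of norms, $4$ being a non-zerodivisor in $R_1$. (Alternatively, once one records that $A\mapsto K(A)-\disc_A$ is a discriminant algebra, \cref{universal} supplies this step at one stroke.)

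\emph{The trace identity and conclusion.} It remains to check $\trace_{D(A)}(\dot x\otimes\dot y)=\trace_{\discalg(A)}\!\bigl(s_1(xy)-\gamma(1,y,x)\bigr)$. On the left, shifting does not change the trace, and in the even Clifford algebra the standard involution swaps $\dot x\otimes\dot y$ and $\dot y\otimes\dot x$, so the trace equals $q_A(\dot x+\dot y)-q_A(\dot x)-q_A(\dot y)$; expanding $q_A(\dot a)=s_1(a^2)-s_2(a)$ with $s_1$ linear and the polarization identity $s_2(x+y)-s_2(x)-s_2(y)=s_1(x)s_1(y)-s_1(xy)$ (immediate from expanding $e_2(x+y)$ and applying $\ferrand_{A/R}$) gives $3s_1(xy)-s_1(x)s_1(y)$. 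On the right, $\trace_{\discalg}(s_1(xy)\cdot 1)=2s_1(xy)$, while the nontrivial involution of $\discalg(A)$, induced by a transposition acting on $\fixpower{A}{3}{\A{3}}$, sends $\gamma(a_1,a_2,a_3)$ to $\gamma(a_2,a_1,a_3)$; hence $\trace_{\discalg}(\gamma(1,y,x))=\gamma(1,y,x)+\gamma(y,1,x)$ is the image of the $\S{3}$-invariant tensor $\sum_{\sigma\in\S{3}}\conjugate{1}{\sigma(1)}\conjugate{y}{\sigma(2)}\conjugate{x}{\sigma(3)}=e_1(y)e_1(x)-e_1(xy)$, whose image under $\ferrand_{A/R}$ is $s_1(x)s_1(y)-s_1(xy)$. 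So the right-hand trace is $2s_1(xy)-(s_1(x)s_1(y)-s_1(xy))=3s_1(xy)-s_1(x)s_1(y)$, matching the left. Gluing the resulting local algebra isomorphisms gives $D(A)\simto\discalg(A)$ for arbitrary $R$, and by \cref{uniqueness} this is the canonical isomorphism of discriminant algebras. I expect the real work to lie in the base-change bookkeeping of the first reduction and in pinning down the discriminant identity $\disc_{K(A)}=-3\disc_A$; the device that keeps the argument short is recovering the norm from the discriminant and the trace, so that the product $\gamma(1,y,x)\cdot\gamma(y,1,x)$ never has to be expanded in $\discalg(A)$.
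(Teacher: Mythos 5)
Your proposal is correct, but it reaches the norm identity by a genuinely different route than the paper. The trace computation is the same in both (both reduce $\trace_{D(A)}(\dot a\otimes\dot b)$ to $q(\dot a+\dot b)-q(\dot a)-q(\dot b)$ and $\trace_{\discalg(A)}(\gamma(1,b,a))$ to $s_1(a)s_1(b)-s_1(ab)$, arriving at $3s_1(ab)-s_1(a)s_1(b)$ on each side). Where you diverge is the norm: the paper verifies $\norm_{K(A)}(\dot a\otimes\dot b)-\disc_A(1\wedge a\wedge b,1\wedge a\wedge b)=\norm_{\discalg(A)}\bigl(s_1(ab)-\gamma(1,b,a)\bigr)$ head-on, expanding the $3\times 3$ discriminant determinant and the product $\gamma(1,b,a)\gamma(1,a,b)$ and simplifying with \cref{relations-for-sk}; you instead base-change to the universal cubic algebra $(R_1,A_1)$, where $4$ is a non-zerodivisor, and recover norm equality from trace equality plus the identification of discriminants via $\disc=\trace^2-4\norm$ --- in effect running the mechanism of \cref{disc-to-iso} with only the trace left to check by hand. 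This is shorter and conceptually cleaner, and the base-change and localization scaffolding you worry about is unproblematic (it is the same reduction the paper uses in the rank-$3$ uniqueness proof, and all ingredients of $\phi$ visibly commute with base change). The cost is that your argument imports, rather than proves, the fact that Rost's shift by $-\disc_A$ makes $\disc_{D(A)}=\disc_A$, equivalently $\disc_{K(A)}=-3\disc_A$. That citation is admissible here, since the paper presents Rost's construction as a discriminant algebra on the authority of \cite{Rost}; but you should be aware that the computational content has been relocated rather than eliminated --- verifying $\disc_{K(A)}=-3\disc_A$ over $(R_1,A_1)$ requires essentially the same expansion of $q(\dot a)q(\dot b)$ against the discriminant determinant (in particular the third identity of \cref{relations-for-sk}) that the paper's norm calculation performs, and the paper's self-contained version yields Rost's discriminant identity as a byproduct instead of consuming it as an input.
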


 The proof proceeds by showing that for all $a,b\in A$, the traces and norms of $\dot{a}\otimes\dot{b}$ in $D(A)$ and $s_1(ab)-\gamma(1,b,a)$ in $\Delta(A)$ agree, which we verify in \cref{misc}.
 Then the fact that the $R$-module isomorphism $D(A)\cong\discalg(A)$ is an algebra isomorphism is due to the following lemma, since elements of the form $\dot a\otimes \dot b$ generate $\dot D(A)$:
 
\begin{lemma}\label{trace-norm}
 Let $R$ be a ring and $D, E$ two quadratic $R$-algebras.
 Let $\phi\colon D\to E$ be an $R$-module isomorphism preserving unity.
 If there exists a subset $\Omega\subset D$ whose image generates $\dot D$ and on which $\phi$ preserves traces and norms, then $\phi$ is an algebra isomorphism.
\end{lemma}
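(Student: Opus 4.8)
The plan is to reduce, by localizing on $\Spec R$, to the case where $D$ has an $R$-module basis of the form $\{1,\omega_0\}$ with $\omega_0\in\Omega$; in that situation the statement becomes a one-line computation with characteristic polynomials.

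First I would note that being an $R$-algebra homomorphism is a local condition: the $R$-linear map $D\otimes_R D\to E$ sending $d_1\otimes d_2\mapsto\phi(d_1)\phi(d_2)-\phi(d_1d_2)$ vanishes as soon as it vanishes after inverting finitely many elements that generate the unit ideal of $R$, and $\phi$ is assumed unital. All of the hypotheses---that $\phi$ is a unital module isomorphism, that the image of $\Omega$ generates $\dot D=D/R$, and that $\phi$ preserves traces and norms on $\Omega$---are stable under base change, so I may freely pass to members of such a cover. Now $\dot D$ is locally free of rank $1$, so after refining I may assume $\dot D$ is free; then the images of the elements of $\Omega$ span a free rank-$1$ module, so their coordinates with respect to a chosen generator of $\dot D$ generate the unit ideal of $R$, and after one more finite affine refinement I may assume that a fixed $\omega_0\in\Omega$ has image equal to a free generator of $\dot D$. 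Since $\dot D$ is free the sequence $0\to R\to D\to\dot D\to 0$ splits, so $\{1,\omega_0\}$ is an $R$-module basis of $D$, and applying the module isomorphism $\phi$ shows $\{1,\phi(\omega_0)\}$ is an $R$-module basis of $E$.

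The second step is the characteristic-polynomial computation. For a rank-$2$ algebra one has the Cayley--Hamilton relations $\omega_0^2=\trace_D(\omega_0)\,\omega_0-\norm_D(\omega_0)\cdot 1$ in $D$ and $\phi(\omega_0)^2=\trace_E(\phi(\omega_0))\,\phi(\omega_0)-\norm_E(\phi(\omega_0))\cdot 1$ in $E$. Applying the $R$-linear unital map $\phi$ to the first relation and using $\trace_D(\omega_0)=\trace_E(\phi(\omega_0))$ and $\norm_D(\omega_0)=\norm_E(\phi(\omega_0))$---which hold because $\omega_0\in\Omega$---gives $\phi(\omega_0^2)=\phi(\omega_0)^2$. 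Expanding a general product $(a_1+b_1\omega_0)(a_2+b_2\omega_0)$ in the basis $\{1,\omega_0\}$, the only term not immediately preserved by the $R$-linear map $\phi$ is the coefficient of $\omega_0^2$, which is now matched against the corresponding coefficient of $\phi(\omega_0)^2$; hence $\phi$ is multiplicative over this member of the cover. Gluing, $\phi$ is an $R$-algebra homomorphism, and being already a unital $R$-module isomorphism it is an $R$-algebra isomorphism.

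The one delicate point---and the reason the hypothesis is phrased via a generating set $\Omega$ rather than by asking $\phi$ to preserve all traces and norms---is that there is no direct global argument: recovering a cross-product $\omega\omega'$ of two distinct elements of $\Omega$ from squares via polarization costs a factor of $2$, which need not be invertible. The localization step is exactly what lets one shrink $\Omega$ down to a single module generator and bypass this. I would also take a little care that the cover used consists of honest affine opens (of the form $\Spec R[1/c]$), so that the vanishing of an $R$-linear map, and hence the property of being an algebra homomorphism, genuinely descends.
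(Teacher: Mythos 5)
Your proof is correct and follows essentially the same route as the paper's: both arguments localize on $\Spec R$ and then use the fact that the trace and norm of a module generator determine the defining quadratic relation, hence the multiplication. The only cosmetic difference is that the paper keeps the whole family $\Omega$ over a coarser cover (writing its elements as $a_i+b_ix$ and using that the $b_i$ generate the unit ideal to force the structure constants to agree), whereas you refine the cover until a single $\omega_0\in\Omega$ is itself a module generator and invoke Cayley--Hamilton for that one element.
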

\begin{proof}
 By working locally, we may assume that $D$ and $E$ are of the form $R[x]/(x^2-sx+m)$ and $R[y]/(y^2-ty+n)$, with the isomorphism $\phi\colon D\to E$ sending $x\mapsto y$.
 Denote the set $\Omega\subset D$ as $\{a_i+b_ix\}_{i\in I}$.
 Then we know that the ideal $(b_i:i\in I)$ is the unit ideal of $R$, and that for all $i\in I$, we have
 \[\trace_D(a_i+b_ix) = \trace_E(a_i+b_iy)\quad\text{and}\quad \norm_D(a_i+b_ix) = \norm_E(a_i+b_iy).\]
 The trace equation becomes $2a_i +b_is = 2a_i+b_it$, which implies $s=t$ since the $b_i$ generate the unit ideal.
 The norm equation becomes $a_i^2+a_ib_is+b_i^2m = a_i^2+a_ib_it+b_i^2n$, and since $s=t$ this reduces to $b_i^2m = b_i^2n$.
 The $b_i^2$ also generate the unit ideal, so $m=n$ as well.
 Therefore $\phi\colon R[x]/(x^2-sx+m)\to R[y]/(y^2-ty+n):x\mapsto y$ is an algebra isomorphism.
\end{proof}

\section{Isomorphism with Loos's discriminant algebra}

In \cite{LoosDiscAlg}, Ottmar Loos uses Rost's shifting technique to construct a discriminant algebra for algebras of arbitrary finite rank $n$.
We review his definition and show that it is also isomorphic to $\discalg$.
We begin with the case of an arbitrary quadratic form on an even-rank module:

\begin{definition}
 Let $R$ be a ring, let $M$ be a locally free rank-$n$ $R$-module with $n$ even, and let $Q\colon  M\to R$ be a quadratic form.  
 Given a bilinear form $f$ representing\footnote{We say that a bilinear form $f\colon M\otimes M\to R$ \emph{represents} a quadratic form $Q\colon M\to R$ if for each $m\in R$ we have $Q(m) = f(m\otimes m)$. Every quadratic form on a locally free finite-rank module is representable by some bilinear form; see \cref{representable}} $Q$, we will define an $R$-algebra $\loosdiscquad[f]{Q}$ with underlying $R$-module $R\oplus \extpower^{n} M$; 
 its elements $(r,u)$ are denoted $r\cdot 1_f + s_f(u)$.  
 By \cref{trace-norm}, the $R$-algebra structure is determined by the traces and norms of the $s_f(u)$ with $u$ decomposable as $u=x_1\wedge\dots\wedge x_n$; 
 these are defined to be
 \begin{align*}
  \trace(s_f(u)) &\coloneqq  \Pf\bigl( f(x_i,x_j)-f(x_j,x_i) \bigr)_{i,j=1}^{n}\\
  \norm(s_f(u)) &\coloneqq  (-1)^{\frac{n}{2} + 1}\qdet\bigl(f(x_i,x_j)\bigr)_{i,j=1}^n,
 \end{align*}
 where the Pfaffian $\Pf(A)$ of an antisymmetric matrix $A$ is the canonical square root of its determinant, and the quarter-determinant $\qdet(A)$ of an even-dimensional square matrix $A$ is the canonical quarter of $\det(A+\transpose{A}) + (-1)^{\frac{n}{2} + 1}\det(A-\transpose{A})$:
 \[
 4\qdet(A) = \det(A+\transpose{A}) + (-1)^{\frac{n}{2} + 1}\det(A-\transpose{A});
 \]
 see \cite{LoosDiscAlgQuad} for details of these matrix constructions.
\end{definition}

Loos's next step is to remove the dependence of $\loosdiscquad[f]{Q}$ on $f$.

\begin{definition}
 Let $Q\colon M\to R$ be a quadratic form on a locally free rank-$n$ $R$-module $M$ with $n$ even, and let $f, g\colon  M\times M\to R$ both represent $Q$. 
  Define a linear map $\kappa_{fg}\colon \extpower^n M\to R$ as follows.  
  Given two antisymmetric matrices $A$ and $A'$, let $t$ be an indeterminate and define $\Pi(t,A,A')$ to be the polynomial in $t$ satisfying
 \[
 \Pf(A + tA') = \Pf(A) + t \Pi(t,A,A').
 \]
 Then $\kappa_{fg}\colon \extpower^n M\to R$ is defined by 
 \[
 \kappa_{fg}(x_1\wedge\dots\wedge x_n)=\Pi\left(-2,A,A'\right),
 \]   
 where $A=\bigl(f(x_i,x_j)-f(x_j,x_i)\bigr)_{i,j=1}^n$ and $A'=\bigl(f(x_i,x_j)-g(x_i,x_j)\bigr)_{i,j=1}^n$.
 (Note that the latter is an antisymmetric matrix, because since $f$ and $g$ both represent $Q$ we have $f(x_i,x_j)+f(x_j,x_i) = Q(x_i+x_j)-Q(x_i)-Q(x_j) = g(x_i,x_j)+g(x_j,x_i)$.)
 
 Now define an $R$-module homomorphism $\loosdiscquad[f]{Q} \to \loosdiscquad[g]{Q}$ sending $1_f \mapsto 1_g$ and $s_f(u) \mapsto s_g(u) + \kappa_{fg}(u)\cdot 1_g$; 
 it is in fact an isomorphism of $R$-algebras (cf.\ \cite[Theorem 2.3]{LoosDiscAlgQuad}).  
 The family of isomorphisms constructed in this way is coherent in the sense that if $h\colon M\times M\to R$ is another bilinear form representing $Q$, the resulting triangle of isomorphisms commutes (cf.\ \cite[Theorem 1.6]{LoosDiscAlgQuad}):
 \[\begin{tikzcd}[column sep=small]
 \loosdiscquad[f]{Q} \arrow{rr}{\sim} \arrow{rd}[swap]{\sim} & &\loosdiscquad[g]{Q} \arrow{ld}{\sim}\\
 & \loosdiscquad[h]{Q} &
 \end{tikzcd}\]
 Define an $R$-module $\loosdiscquad{Q}$ as follows: First form the direct sum of $R$-modules $\bigoplus_f \loosdiscquad[f]{Q}$, where $f$ ranges over all bilinear forms representing $Q$.  
 Then $\loosdiscquad{Q}$ is the quotient of this direct sum by the submodule generated by all differences of the forms $1_f-1_g$ and $s_f(u)-\bigl(s_g(u)+\kappa_{fg}(u)\cdot 1_g\bigr)$, so that each triangle of $R$-module homomorphisms of the following form commutes:
 \[\begin{tikzcd}[column sep=small]
 \loosdiscquad[f]{Q} \arrow{rr}{\sim} \arrow{rd} & &\loosdiscquad[g]{Q} \arrow{ld}\\
 & \loosdiscquad{Q} &
 \end{tikzcd}\]
 Then in fact each $R$-module homomorphism $\loosdiscquad[f]{Q}\to\loosdiscquad{Q}$ is an isomorphism, and we can transport each $R$-algebra structure on the $\loosdiscquad[f]{Q}$ to an $R$-algebra structure on $\loosdiscquad{Q}$ that is independent of $f$.  
 (In categorical terms, we have taken the colimit of a diagram whose shape category is contractible.)
 This quadratic $R$-algebra is called the {\em Loos discriminant algebra} of $Q$.
\end{definition}

\begin{remark}\label{representable}
 Note that the construction of $\loosdiscquad{Q}$ only makes sense if $Q$ can be represented by some bilinear form $f$, so that we are not taking the colimit of an empty diagram.
 This is easy to see in case $M$ is a free $R$-module: we can always represent a quadratic form $Q\colon R^n\to R$ by the linear form $f\colon R^n\times R^n \to R$ given by
 \[f(\basis_i,\basis_j) = \begin{cases}
  B(\basis_i,\basis_j) &\text{if }i<j\\
  Q(\basis_i)&\text{if }i=j\\
  0&\text{if }i>j,
 \end{cases}\]
 where $B(x,y)\coloneqq Q(x+y)-Q(x)-Q(y)$ is the polar bilinear form of $Q$.
 Then if necessary, we could define $\loosdiscquad{Q}$ by working on patches where $M$ is free and gluing together the resulting quadratic algebras.
 This is in fact what we would do to generalize $\loosdiscquad{Q}$ to the context of a quadratic form on a finite-rank vector bundle on a scheme, but in the affine context it is not necessary: every quadratic form on a finite projective module can already by represented by a bilinear form.
 
 To see this, note that the module of bilinear forms on $M$ is given by $\Hom_{R}(M^{\otimes 2},R)$, and the module of quadratic forms is $\Hom_R(\fixpower{M}{2}{\S{2}},R)$.
 (For the latter, note that quadratic forms $M\to R$ correspond to homogeneous degree-$2$ polynomial laws by \cite[Prop.\ II.1 on p.\ 236]{PolMaps}, thus to $R$-linear functions $\Gamma^2_R(M)\to R$ by \cite[Thm.\ IV.1 on p.\ 266]{PolMaps}.
 Since $M$ is flat, we have $\Gamma^2_R(M)\cong\fixpower{M}{2}{\S{2}}$ by \cite[5.5.2.5 on p.\ 123]{deligne1973cohomologie}.)
 Since both $M^{\otimes 2}$ and $\fixpower{M}{2}{\S{2}}$ are finitely presented, both of these hom-module constructions commute with localization, so the homomorphism $\Hom_R(M^{\otimes 2},R)\to\Hom_R(\fixpower{M}{2}{\S{2}},R)$ is surjective because it is locally so.
\end{remark}


Finally, Loos defines the discriminant algebra of an even-rank algebra as a shift of the discriminant algebra of its quadratic trace $s_2$.

\begin{definition}
 Let $R$ be a ring and let $A$ be a rank-$n$ $R$-algebra with $n$ even, and let $\loosdiscquad{Q}$ be the Loos discriminant algebra of the quadratic form $Q=s_2\colon A\to R$.  
 Then the {\em Loos discriminant algebra $\loosdiscalg(A)$ of $A$ over $R$} is the shift of $\loosdiscquad{Q}$ by the bilinear form $(-1)^{\frac{n}2 + 1}\floor{n/4} \cdot \disc_{A}$:
 \[
 \loosdiscalg(A) \coloneqq \loosdiscquad{Q} + (-1)^{\frac{n}2 + 1}\floor{n/4} \cdot \disc_{A}.
 \]
\end{definition}

By \cite[Theorem 2.3(c)]{LoosDiscAlgQuad} and \cite[Lemma 3.6]{LoosDiscAlg}, this construction is a discriminant algebra in the sense of \cref{discalg}.
In the remainder of this section, we will exhibit an isomorphism of discriminant algebras $\loosdiscalg\cong\discalg$.


\begin{remark}
 Loos extends his definition of discriminant algebra to cover the odd-rank case by defining $\loosdiscalg(A)\coloneqq\loosdiscalg(R\times A)$ if $A$ is an $R$-algebra of odd rank.
 By \cite[Theorem 8.5]{Bie15}, for such an algebra $A$ we also have $\discalg(A)\cong \discalg(R\times A)$, so we obtain an isomorphism $\loosdiscalg\cong\discalg$ in the odd-rank case from the isomorphism in the even-rank case.
\end{remark}


\begin{definition}\label{def-Rtilde-Xtilde}
 Fix an even natural number $n$.  
 Define a ring $R_1$ as the polynomial ring $\Z[f_{ij}: 1\leq i<j\leq n]$ in $\binom{n}{2}$ indeterminates. 
 Let $\Xbase$ be the polynomial $\Rbase$-algebra in $n$ variables $\Rbase[x_1,\dots,x_n]$.  
 Then for each ring $R$ and elements $a_1,\dots,a_n$ in an $R$-algebra $A$ equipped with a bilinear form $f\colon A\times A\to R$, we obtain a ring homomorphism $\psi_{(f,a_1,\dots,a_n)}\colon \Rbase\to R$ sending each $f_{ij}\mapsto f(a_i,a_j)$.  
 This makes $R$, and hence $A$, into an $\Rbase$-algebra, and we also obtain an $\Rbase$-algebra homomorphism $\chi_{(f,a_1,\dots,a_n)}\colon \Xbase\to A$ sending each $x_i\mapsto a_i$.
\end{definition}

\begin{definition}\label{def-Qi-Bij-Fij}
 With $\Rbase$ and $\Xbase$ as in Definition~\ref{def-Rtilde-Xtilde}, define the following $\permgroup{n}$-invariant elements of $\Xbase^{\otimes n}$, the $n$th tensor power of $\Xbase$ over $\Rbase$:
 \begin{itemize}
  \item For each $i\in\set{n}$, define $Q_i \coloneqq e_2(x_i) = \sum_{k<\ell}\conjugate{x_i}{k}\conjugate{x_i}{\ell}$.
  \item For each $i,j\in\set{n}$, define $B_{ij}\coloneqq e_2(x_i+x_j)-e_2(x_i)-e_2(x_j) = \sum_{k\neq\ell} \conjugate{x_i}{k}\conjugate{x_j}{\ell}$.  
 \end{itemize}
 Now let $R$ be a ring and $A$ a rank-$n$ $R$-algebra, let $f\colon A\times A\to R$ be a bilinear form representing the quadratic form $Q=s_2\colon A\to R$, and let $B(a,a')=Q(a+a')-Q(a)-Q(a')=f(a,a')+f(a',a)$ be the polar form of $Q$.  
 Choose elements $a_1,\dots,a_n\in A$, and let $\chi=\chi_{(f,a_1,\dots,a_n)}$ be the homomorphism $\Xbase\to A$ of Definition~\ref{def-Rtilde-Xtilde}. 
 Then obtain the composite homomorphism
 \[
 \ferrand_{A/R}\circ \chi^{\otimes n}\colon  \fixpower{\Xbase}{n}{\permgroup n} \to \fixpower{A}{n}{\permgroup{n}}\to R,
 \]
 which sends $Q_i \mapsto Q(a_i)$ and $B_{ij} \mapsto B(a_i,a_j)$ for all $i,j\in\set{n}$.  
 Now if we also define elements $F_{ij}\in\fixpower{\Xbase}{n}{\permgroup{n}}$ for each $i,j\in\set{n}$ by
 \[
 F_{ij} = \begin{cases}
  f_{ij} &\text{if }i<j\\
  Q_i &\text{if }i=j\\
  B_{ij}-f_{ji} &\text{if }i>j,
 \end{cases}
 \]
 we find that 
 \[
 \phi_{A/R}\circ \chi^{\otimes n}\colon F_{ij}\mapsto \begin{cases}
  f(a_i,a_j) &\text{if }i<j\\
  Q(a_i)=f(a_i,a_i) &\text{if }i=j\\
  B(a_i,a_j)-f(a_j,a_i)=f(a_i,a_j) &\text{if }i>j.
 \end{cases}
 \]
 So in all cases, $F_{ij}\mapsto f(a_i,a_j)$.  
 We will denote by $F$ the $n\times n$-matrix of elements of $\fixpower{\Xbase}{n}{\permgroup{n}}$ whose $i,j$th entry is $F_{ij}$.
\end{definition}

\begin{remark}\label{rmk-homom-to-discalg}
 For each choice of representative $f$ of $Q$ and elements $a_1,\dots,a_n\in A$, we may also use the homomorphism $\chi=\chi_{f,a_1,\dots,a_n}\colon \Xbase\to A$ of Definition~\ref{def-Qi-Bij-Fij} to obtain a composite $\fixpower{\Xbase}{n}{\altgroup{n}}\to\fixpower{A}{n}{\altgroup{n}}\to\discalg(A)$ that also sends each $F_{ij}$ to $f(a_i,a_j)$.  
 Thus we may construct elements of $\discalg(A)$ as the images of $\altgroup{n}$-invariant elements of $\Xbase^{\otimes n}$.
\end{remark}


\begin{lemma}\label{lemma-Sigma-existence}
 Let $\Rbase$ and its algebra $\Xbase$ be as in Definition~\ref{def-Rtilde-Xtilde}, and $F$ the $n\times n$-matrix of \cref{def-Qi-Bij-Fij}. 
 There is a unique element $\Sigma_F$ of $\fixpower{\Xbase}{n}{\altgroup{n}}$ such that
 \[
 2\Sigma_F = \det\bigl(\conjugate{x_i}{j}\bigr)_{i,j=1}^n + \Pf(F-\transpose{F}).
 \]
\end{lemma}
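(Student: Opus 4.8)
The plan is to reduce the statement to a parity computation modulo~$2$, and then to carry that computation out by unwinding the Pfaffian combinatorially.

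First observe that $\Xbase^{\otimes_\Rbase n}$ is a polynomial ring over $\Z$ (in the variables $f_{ij}$ together with the $\conjugate{x_i}{j}$), hence an integral domain in which $2$ is a non-zerodivisor; this already gives uniqueness of $\Sigma_F$, even as an element of $\Xbase^{\otimes_\Rbase n}$. Next I would check that both summands on the right-hand side are $\altgroup{n}$-invariant: the symmetric group permutes the tensor factors of $\Xbase^{\otimes_\Rbase n}$ and carries $\det\bigl(\conjugate{x_i}{j}\bigr)_{i,j}$ to $\sgn(\sigma)$ times itself, so $\altgroup{n}$ fixes it, while all of $Q_i$, $B_{ij}$, $f_{ij}$ are $\permgroup{n}$-invariant and hence the Pfaffian of $F-\transpose{F}$, a universal polynomial in these entries, lies in $\fixpower{\Xbase}{n}{\permgroup{n}}\subseteq\fixpower{\Xbase}{n}{\altgroup{n}}$. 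Therefore it suffices to exhibit \emph{any} $z\in\Xbase^{\otimes_\Rbase n}$ with $2z=\det\bigl(\conjugate{x_i}{j}\bigr)_{i,j}+\Pf(F-\transpose{F})$: such a $z$ is automatically $\altgroup{n}$-invariant because $2$ is a non-zerodivisor and the right-hand side is invariant, and we may then take $\Sigma_F=z$. So everything comes down to proving the congruence
\[
\det\bigl(\conjugate{x_i}{j}\bigr)_{i,j=1}^n\;\equiv\;\Pf(F-\transpose{F})\pmod 2
\]
in $\Xbase^{\otimes_\Rbase n}$.

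Both sides can be made completely explicit modulo~$2$. On the left, the determinant reduces to the permanent: $\det\bigl(\conjugate{x_i}{j}\bigr)\equiv\sum_{\phi}\prod_{i=1}^n\conjugate{x_i}{\phi(i)}$, the sum over bijections $\phi\colon\set{n}\to\set{n}$. On the right, reading off the definition of $F$ gives $(F-\transpose{F})_{ij}=2f_{ij}-B_{ij}$ for $i<j$ and $0$ on the diagonal, so modulo~$2$ the Pfaffian — a $\pm1$-coefficient polynomial in these entries — becomes $\Pf(F-\transpose{F})\equiv\sum_{\pi}\prod_{\{i<j\}\in\pi}B_{ij}$, summed over perfect matchings $\pi$ of $\set{n}$. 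Now rewrite $B_{ij}$ modulo~$2$ via $B_{ij}=e_1(x_i)e_1(x_j)-e_1(x_ix_j)$, obtained by splitting $e_1(x_i)e_1(x_j)=\sum_{k,\ell}\conjugate{x_i}{k}\conjugate{x_j}{\ell}$ into its $k=\ell$ and $k\ne\ell$ parts. Expanding each factor of $\prod_{\{i<j\}\in\pi}B_{ij}$ and summing over $\pi$, every term is indexed by a partial matching $T\subseteq\pi$ and equals $\prod_{\{i<j\}\in T}e_1(x_ix_j)\cdot\prod_{k\notin\bigcup T}e_1(x_k)$, which depends only on $T$; for a fixed partial matching $T$ of $\set{n}$ the number of perfect matchings $\pi\supseteq T$ is the odd double factorial $(n-2\card{T}-1)!!$ (with $(-1)!!=1$). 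Hence, modulo~$2$,
\[
\Pf(F-\transpose{F})\;\equiv\;\sum_{T}\ \prod_{\{i<j\}\in T}e_1(x_ix_j)\prod_{k\notin\bigcup T}e_1(x_k),
\]
the sum now over all partial matchings $T$ of $\set{n}$.

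It remains to substitute $e_1(x_ix_j)=\sum_m\conjugate{x_i}{m}\conjugate{x_j}{m}$ and $e_1(x_k)=\sum_m\conjugate{x_k}{m}$, expand, and collect monomials. Each resulting term is indexed by a function $\phi\colon\set{n}\to\set{n}$ that is constant on the pairs of $T$, and equals $\prod_{\ell=1}^n\conjugate{x_\ell}{\phi(\ell)}$; distinct $\phi$ give distinct monomials in the polynomial ring $\Xbase^{\otimes_\Rbase n}$. Collecting by $\phi$, the coefficient of $\prod_\ell\conjugate{x_\ell}{\phi(\ell)}$ is $\prod_{m=1}^n g\bigl(\card{\phi^{-1}(m)}\bigr)$, where $g(s)$ is the number of partial matchings of an $s$-element set, equivalently the number of involutions in $\permgroup{s}$. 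The crucial arithmetic input is that $g(0)=g(1)=1$ while $g(s)$ is \emph{even} for every $s\ge 2$, which follows by induction from the recurrence $g(s)=g(s-1)+(s-1)g(s-2)$. Therefore $\prod_m g\bigl(\card{\phi^{-1}(m)}\bigr)$ is odd precisely when every fiber of $\phi$ has size at most~$1$, i.e.\ when $\phi$ is a bijection; so modulo~$2$ the last display collapses to $\sum_\phi\prod_\ell\conjugate{x_\ell}{\phi(\ell)}$ over bijections $\phi$, which is exactly the permanent, hence $\det\bigl(\conjugate{x_i}{j}\bigr)$ modulo~$2$. This establishes the congruence and with it the lemma. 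The one genuinely non-formal ingredient is the $2$-adic parity of the involution numbers $g(s)$; the rest is bookkeeping of matchings and of position-assignments.
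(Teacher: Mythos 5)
Your proof is correct, and its overall skeleton matches the paper's: uniqueness and automatic $\altgroup{n}$-invariance from $2$ being a non-zerodivisor in the polynomial ring $\Xbase^{\otimes n}$, reduction to the congruence $\Pf(F-\transpose{F})\equiv\det\bigl(\conjugate{x_i}{j}\bigr)\pmod 2$, and then a combinatorial parity count of monomial multiplicities in the Pfaffian expansion $\sum_\pi\prod_{\{i<j\}\in\pi}B_{ij}$. Where you genuinely diverge is in how that count is executed. The paper expands each $B_{ij}$ directly as $\sum_{k\neq\ell}\conjugate{x_i}{k}\conjugate{x_j}{\ell}$, so a monomial $\prod_\ell\conjugate{x_\ell}{k_\ell}$ is counted by the perfect matchings $\pi$ separating its positions; the paper shows this count is $(n-1)!!$ (odd) when the $k_\ell$ are distinct and otherwise pairs off the contributing matchings by a fixed-point-free involution (swapping partners across two vertices with equal positions). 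You instead split $B_{ij}=e_1(x_i)e_1(x_j)-e_1(x_ix_j)$ --- an inclusion--exclusion on the constraint $k\neq\ell$ --- reorganize the double sum over partial matchings $T$ using the oddness of $(n-2\card{T}-1)!!$, and land on the multiplicity $\prod_m g(\card{\phi^{-1}(m)})$, whose parity is controlled by the evenness of the involution numbers $g(s)$ for $s\geq 2$. Both arguments are complete; the paper's involution is perhaps more self-contained (one parity fact, $(n-1)!!$ odd, plus a bijective pairing), while yours trades the ad hoc pairing for a clean closed-form multiplicity and a standard recurrence, at the cost of an extra layer of reindexing. Either way the non-formal content is the same: a monomial survives modulo $2$ exactly when its position-assignment $\phi$ is a bijection, recovering the permanent, i.e.\ the determinant modulo $2$.
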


We relegate the proof of this lemma to \cref{misc}.

\begin{definition}\label{notation-sigma-f}
 Let $R$ be a ring and $A$ be a rank-$n$ $R$-algebra with $n\geq 2$ and even.
 Let $f$ be a bilinear form representing the quadratic form $Q=s_2\colon A\to R$, and let $a_1,\dots,a_n\in A$.
 Let $\Xbase$ be as in Definition~\ref{def-Rtilde-Xtilde}, with the homomorphism $\chi=\chi_{(f,a_1,\dots,a_n)}\colon \Xbase\to A$ of Definition~\ref{def-Qi-Bij-Fij} and the element $\Sigma_F\in\fixpower{\Xbase}{n}{\altgroup{n}}$ of Lemma~\ref{lemma-Sigma-existence}. 
 Define $\sigma_f(a_1,\dots,a_n)\in\discalg(A)$ as the image of $\Sigma_F$ under the homomorphism $\fixpower{\Xbase}{n}{\altgroup{n}}\to\fixpower{A}{n}{\altgroup{n}}\to \discalg(A)$ as in Remark~\ref{rmk-homom-to-discalg}.
\end{definition}

\begin{remark}
By construction, $2\sigma_f(a_1,\dots,a_n)\in\discalg(A)$ is always equal to 
\[
\gamma(a_1,a_2,\dots,a_n) - \gamma(a_2,a_1,\dots,a_n) + \Pf(f(a_i,a_j)-f(a_j,a_i))_{ij=1}^n.
\]
\end{remark}

\begin{theorem}\label{loos-mod-iso}
 Let $R$ be a ring and let $A$ be a rank-$n$ $R$-algebra with $n\geq 2$ and even, with the quadratic form $Q=s_2\colon A\to R$. 
 Then there is a unique $R$-module isomorphism $\loosdiscquad{Q}\to \discalg(A)$ sending $1$ to $1$ and $s_f(a_1\wedge\dots\wedge a_n)$ to $\sigma_f(a_1,\dots,a_n)$ for each tuple $(a_1,\dots,a_n)\in A^n$ and each bilinear representative $f$ of $Q$.
\end{theorem}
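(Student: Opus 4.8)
Uniqueness is automatic: $\loosdiscquad{Q}$ is generated as an $R$-module by $1$ together with the images of the $s_f(u)$ for decomposable $u=a_1\wedge\dots\wedge a_n$ (for any single $f$ these already span, since $\loosdiscquad[f]{Q}=R\oplus\extpower^nA$). So it remains to produce such a homomorphism and show it is an isomorphism, which I would do in three steps: (i) for each bilinear representative $f$ of $Q$, construct $\Phi_f\colon\loosdiscquad[f]{Q}\to\discalg(A)$ with $1_f\mapsto1$ and $s_f(a_1\wedge\dots\wedge a_n)\mapsto\sigma_f(a_1,\dots,a_n)$; (ii) check that the $\Phi_f$ are compatible with the transition isomorphisms $\loosdiscquad[f]{Q}\to\loosdiscquad[g]{Q}$, so that they glue to a map $\Phi\colon\loosdiscquad{Q}\to\discalg(A)$ out of the defining colimit; (iii) recognize $\Phi$ as a morphism between the two canonical extensions $0\to R\to\loosdiscquad{Q}\to\extpower^nA\to0$ and $0\to R\to\discalg(A)\to\extpower^nA\to0$ (the latter underlying the quadratic algebra $\discalg(A)$, cf.\ \cite[Theorem 4.1]{Bie15}) that induces the identity on $R$ and on $\extpower^nA$, whence it is an isomorphism by the five lemma.

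\textbf{The universal technique.} Step (i) needs only that $(a_1,\dots,a_n)\mapsto\sigma_f(a_1,\dots,a_n)$ be $R$-multilinear and alternating, and every such property will be proved by the same device. By construction $\sigma_f(a_1,\dots,a_n)$ is the image of the element $\Sigma_F$ of \cref{lemma-Sigma-existence} under the natural ring homomorphism $\fixpower{\Xbase}{n}{\altgroup{n}}\to\discalg(A)$; the ring $\fixpower{\Xbase}{n}{\altgroup{n}}$ is a ring of invariants in a polynomial ring over $\Z$, hence an integral domain, and $\Sigma_F$ is characterized there by $2\Sigma_F=\det(\conjugate{x_i}{j})+\Pf(F-\transpose F)$. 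To get an identity for $\sigma_f$ one checks the corresponding identity for $\det(\conjugate{x_i}{j})+\Pf(F-\transpose F)$ inside $\fixpower{\Xbase}{n}{\altgroup{n}}$, or a polynomial enlargement of it obtained by adjoining indeterminates, cancels the non-zerodivisor $2$, and pushes the resulting identity for $\Sigma_F$ forward to $\discalg(A)$. Additivity in one argument comes from adjoining a vector indeterminate $x_0$ and comparing the substitutions $x_1\mapsto x_0$, $x_1\mapsto x_1$, $x_1\mapsto x_0+x_1$ (the determinant is additive in its first row, and $\Pf$ is additive in its first row once the complementary submatrix is fixed); $R$-homogeneity comes from adjoining a scalar indeterminate $r$ and the substitution $x_1\mapsto rx_1$, which scales the first row of $(\conjugate{x_i}{j})$ by $r$ and conjugates $F-\transpose F$ by $\mathrm{diag}(r,1,\dots,1)$; antisymmetry under $\permgroup{n}$ comes from relabelling the $x_i$, which conjugates both matrices by a permutation matrix; and vanishing when two successive arguments coincide comes from the endomorphism of $\fixpower{\Xbase}{n}{\altgroup{n}}$ sending $\conjugate{x_1}{k}\mapsto\conjugate{x_2}{k}$, $f_{1j}\mapsto f_{2j}$, $f_{12}\mapsto e_2(x_2)$: it forces the first two rows of $F-\transpose F$ and of $(\conjugate{x_i}{j})$ to agree, hence kills $2\Sigma_F$ and so $\Sigma_F$, and composing it with an evaluation homomorphism realizes $\sigma_f(a,a,a_3,\dots,a_n)$ in $\discalg(A)$ — the replacement of the scalar $f(a,a)$ by the tensor $e_2(a)$ is harmless since $\fixpower{A}{n}{\permgroup{n}}\to\discalg(A)$ factors through $\ferrand_{A/R}$. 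Antisymmetry together with vanishing on successive repeats yields the full alternating property, so $\sigma_f$ factors through $\extpower^nA$ and $\Phi_f$ is defined.

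\textbf{Gluing and the isomorphism.} For step (ii) I must show $\sigma_f(a_1,\dots,a_n)-\sigma_g(a_1,\dots,a_n)=\kappa_{fg}(a_1\wedge\dots\wedge a_n)\cdot1$ in $\discalg(A)$ when $f,g$ both represent $Q$. Working in the polynomial-invariant domain with indeterminates for both $f$ and $g$, the determinant contributions to $2\Sigma_{F^f}$ and $2\Sigma_{F^g}$ coincide while $F^g-\transpose{F^g}$ equals $(F^f-\transpose{F^f})-2A'$, where $A'$ is the antisymmetric matrix of differences $f_{ij}-g_{ij}$; evaluating the defining polynomial $\Pi$ at $t=-2$ then gives $\Pf(F^f-\transpose{F^f})-\Pf(F^g-\transpose{F^g})=2\,\Pi(-2,F^f-\transpose{F^f},A')$, so cancelling $2$ and pushing forward yields the claim, $\kappa_{fg}$ being by Loos's definition the image of $\Pi(-2,F^f-\transpose{F^f},A')$. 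Since $\Phi_f$ and $\Phi_g$ then agree on $1_f-1_g$ and on $s_f(u)-(s_g(u)+\kappa_{fg}(u)1_g)$, they descend to $\Phi\colon\loosdiscquad{Q}\to\discalg(A)$ with the required values. For step (iii), $\Phi$ plainly carries $R\cdot1$ into $R\cdot1$; to see it induces the identity on $\extpower^nA$ I would use that in $\fixpower{\Xbase}{n}{\altgroup{n}}$ one has $\det(\conjugate{x_i}{j})=2\bigl(\sum_{\sigma\in\altgroup{n}}\prod_i\conjugate{x_i}{\sigma(i)}\bigr)-\bigl(\sum_{\sigma\in\permgroup{n}}\prod_i\conjugate{x_i}{\sigma(i)}\bigr)$, so that $2\Sigma_F$ minus twice the first (alternating) sum equals $\Pf(F-\transpose F)$ minus the permanent, a $\permgroup{n}$-invariant element; the ambient ring being a domain, $\Sigma_F$ itself differs from $\sum_{\sigma\in\altgroup{n}}\prod_i\conjugate{x_i}{\sigma(i)}$ by a $\permgroup{n}$-invariant element. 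Pushing forward shows $\sigma_f(a_1,\dots,a_n)=\gamma(a_1,\dots,a_n)+(\text{an element of }R\cdot1)$, so $\sigma_f(a_1,\dots,a_n)$ and $\gamma(a_1,\dots,a_n)$ have the common image $a_1\wedge\dots\wedge a_n$ in $\extpower^nA$; hence $\Phi$ induces the identity there and the five lemma finishes.

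\textbf{Main obstacle.} The substance is the bookkeeping: arranging each universal argument so that the division by $2$ is carried out inside a polynomial-invariant \emph{domain} and only afterwards transported to $\discalg(A)$ (which need not be reduced), and verifying in each case that the composite of the chosen substitution homomorphism with an evaluation map $\fixpower{\Xbase}{n}{\altgroup{n}}\to\discalg(A)$ produces exactly the intended element of $\discalg(A)$ — the one point needing care being that a variable such as $f_{12}$ is sent by a merging substitution to the tensor $e_2(x_2)$ rather than to a scalar, which is innocuous only because the image of $\fixpower{A}{n}{\permgroup{n}}$ in $\discalg(A)$ is computed through the Ferrand homomorphism.
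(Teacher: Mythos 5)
Your proposal is correct and follows the same overall route as the paper's proof: uniqueness from generation, construction of each $\Phi_f$ by establishing $R$-multilinearity and alternation of $\sigma_f$ through universal computations with $2\Sigma_F=\det\bigl(\conjugate{x_i}{j}\bigr)+\Pf(F-\transpose{F})$ carried out in the polynomial invariant ring (a domain, so the factor of $2$ can be cancelled before evaluating), compatibility of $\Phi_f$ and $\Phi_g$ via $\Pf(G-\transpose{G})=\Pf(F-\transpose{F})-2\Pi(-2,F-\transpose{F},F-G)$, and the conclusion via the morphism of extensions of $\extpower^nA$ by $R$. The one place you genuinely diverge is the alternation step, and there your version is the more careful one. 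The paper passes to $\Xbase'=\Xbase/(x_k-x_\ell)$ and asserts that $\Pf(F-\transpose{F})$ already vanishes in $\Xbase'^{\otimes n}$; but the $(k,\ell)$ entry $2f_{k\ell}-B_{k\ell}$ of $F-\transpose{F}$ maps there to $2f_{k\ell}-2e_2(x_k)$, which is nonzero because the indeterminate $f_{k\ell}$ is untouched by that quotient, so the Pfaffian only dies after evaluation in $\discalg(A)$ --- at which point dividing $2\Sigma_F$ by $2$ is no longer automatic, since $\discalg(A)$ may have $2$-torsion. Your substitution $f_{12}\mapsto e_2(x_2)$ forces the two relevant rows to agree already in a polynomial domain, where $2$ is cancelled legitimately, and your justification that the composite with evaluation still computes $\sigma_f(a,a,a_3,\dots,a_n)$ --- namely that $f(a,a)\cdot1$ and $e_2(a)$ are both $\permgroup{n}$-invariant tensors with the same image under $\ferrand_{A/R}$, hence are identified in $\discalg(A)=\fixpower{A}{n}{\altgroup{n}}\otimes_{\fixpower{A}{n}{\permgroup{n}}}R$ --- is exactly the point that needs checking and is right. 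The remaining ingredients (additivity and homogeneity via adjoined indeterminates and row operations on the two matrices, and the observation that $\Sigma_F-\sum_{\sigma\in\altgroup{n}}\prod_i\conjugate{x_i}{\sigma(i)}$ is $\permgroup{n}$-invariant because twice it is and the ambient ring is a domain, which gives the commutativity of the right-hand square of extensions) match the paper's argument in substance.
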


\begin{proof}
 Uniqueness is assured, since $1$ and elements of the form $s_f(a_1\wedge\dots\wedge a_n)$ generate $\loosdiscquad{Q}$.  
 To demonstrate existence, we show that for each $f$ representing $Q$, the assignment $1_f\mapsto 1$ and $s_f(a_1\wedge\dots\wedge a_n)\mapsto \sigma_f(a_1,\dots,a_n)$ extends to an $R$-module homomorphism $\loosdiscquad[f]{Q}\to \discalg(A)$, and then that for any pair of representatives $f$ and $g$ of $Q$, the homomorphisms $\loosdiscquad[f]{Q}\to \discalg(A)$ and $\loosdiscquad[g]{Q}\to\discalg(A)$ commute with the isomorphism $\loosdiscquad[f]{Q}\to\loosdiscquad[g]{Q}$.  
 These amount to the following claims about the $\sigma_f$:
 \begin{enumerate}
  \item Multilinearity: $\sigma_f(a_1,\dots,a_n)$ is $R$-linear in each $a_k$.
  \item Alternation: $\sigma_f(a_1,\dots,a_n)=0$ if $a_i=a_j$ for any $i\neq j$.
  \item Compatibility: $\sigma_f(a_1,\dots,a_n) = \sigma_g(a_1,\dots,a_n) + \kappa_{fg}(a_1\wedge\dots\wedge a_n)$.
 \end{enumerate}
 
 {\em Proof of multilinearity.}
  Let $\chi=\chi_{(f,a_1,\dots,a_n)}\colon \Xbase\to A$ be the ring homomorphism of Definition~\ref{def-Rtilde-Xtilde}.  
  We know that under $\phi_{A/R}\circ \chi^{\otimes n}\colon  \fixpower{\Xbase}{n}{\permgroup{n}}\to R$, the images of $f_{ij}$ and $B_{ij}$ are $f(a_i,a_j)$ and $f(a_i,a_j)+f(a_j,a_i)$, respectively, and hence vary $R$-linearly with $a_i$ and $a_j$.  
  Then for each $k\in\set{n}$, since every term of $\Pf(F-\transpose{F})$ contains exactly one factor of the form $\conjugate{x_k}{j}$ for some $j$, or $f_{ik}$ or $f_{kj}$ for some $i<k$ or $j>k$, the image of $\Pf(F-\transpose{F})$ varies $R$-linearly with $a_k$. 
 Similarly, every term in $\det\bigl(\conjugate{ x_i}{j}\bigr)_{ij=1}^n$ contains exactly one factor of the form $\conjugate{ x_k}{j}$ for some $j$, and hence varies $R$-linearly with $a_k$.
 The image of $\Sigma_F$ under $\chi$, which is half of their sum, must then also vary $R$-linearly with each $a_k$.
 Thus $\sigma_f$ extends to an $R$-linear map $A^{\otimes n}\to \discalg(A)$.
 
 {\em Proof of alternation.}  
 Suppose $a_k=a_\ell$ for some pair $k\neq \ell$.  
 Then the homomorphism $\Xbase\to A$ is unchanged under the automorphism of $\Xbase$ transposing $ x_k$ and $ x_\ell$; 
 therefore it factors through the quotient $\Xbase' = \Xbase/( x_k -  x_\ell)$.  
 But $\det(\conjugate{ x_i}{j})$ and $\Pf(F-\transpose{F})$ both vanish under the quotient map $\Xbase^{\otimes n}\to\Xbase'^{\otimes n}$. 
 Then $\Sigma_F$ also maps to $0$ in $\Xbase'^{\otimes n}$, and so $\sigma_f(a_1,\dots,a_n)=0$.  
 Therefore $\sigma_f$ descends to an $R$-linear map $\extpower^n A \to \discalg(A)$, and together with the assignment $1_f\mapsto 1$ we obtain an $R$-linear map $\loosdiscquad[f]{Q}\cong R\oplus \extpower^n A\to \discalg(A)$ sending each $s_f(x_1\wedge\dots\wedge x_n)$ to $\sigma_f(x_1,\dots,x_n)$.
 
 {\em Proof of compatibility.} 
 Let $f$ and $g$ be two bilinear forms representing $Q$.  
 For each choice of $a_1,\dots,a_n\in A$, we obtain two ring homomorphisms $\Rbase\to R$ and two homomorphisms $\Xbase\to A$; 
 we may represent this situation by now making $R$ into a $\Rbasetwo \coloneqq \Rbase \otimes_\Z \Rbase \cong \Z[f_{ij}, g_{ij}: 1\leq i<j\leq n]$-algebra; then letting $\Xbasetwo \coloneqq \Rbasetwo\otimes_\Rbase \Xbase \cong \Rbasetwo[ x_1,\dots, x_n]$ we have an $\Rbasetwo$-algebra homomorphism $\Xbasetwo\to A$ sending each $ x_i \mapsto a_i$, $f_{ij}\mapsto f(a_i,a_j)$, and $ g_{ij}\mapsto g(a_i,a_j)$. 
  Then we can build matrices $F$ and $G$ with entries in $\fixpower{\Xbasetwo}{n}{\altgroup{n}}$, and thus elements 
 \begin{align*}
  \Sigma_F &= \frac12\bigl(\det(\conjugate{ x_i}{j}\bigr)_{i,j} + \Pf(F-\transpose{F}))\mapsto \sigma_f(a_1,\dots,a_n)\\
  \Sigma_G &= \frac12\bigl(\det(\conjugate{ x_i}{j}\bigr)_{i,j} + \Pf(G-\transpose{G}))\mapsto \sigma_g(a_1,\dots,a_n)
 \end{align*}
 of $\fixpower{\Xbasetwo}{n}{\altgroup{n}}$.
 Then $\Sigma_F-\Sigma_G\mapsto \sigma_f(a_1,\dots,a_n)-\sigma_g(a_1,\dots,a_n)$; 
 we will show that it also maps to $\kappa_{fg}(a_1\wedge\dots\wedge a_n)$.  
 Indeed, $\Sigma_F-\Sigma_G$ is a difference of Pfaffians:
 \[
 \Sigma_F-\Sigma_G = \frac12\bigl(\Pf(F-\transpose{F}) - \Pf(G-\transpose{G})\bigr).
 \]
  Now $F-G$ is an antisymmetric matrix whose $ij$th entry with $i<j$ is $f_{ij} - g_{ij}$.
 Then $(G-\transpose{G}) = (F-\transpose{F}) - 2(F-G)$, so
  \begin{align*}
   \Pf(G-\transpose{G}) &= \Pf\bigl((F-\transpose{F})- 2(F-G)\bigr)\\
   &= \Pf(F-\transpose{F}) - 2\Pi(-2,F-\transpose{F}, F-G),
  \end{align*}
  so
  \begin{align*}
   \Sigma_F-\Sigma_G &= \frac12\bigl(\Pf(F-\transpose{F}) - \Pf(F-\transpose{F}) + 2\Pi(-2,F-\transpose{F},F-G)\bigr)\\
   &= \Pi(-2,F-\transpose{F},F-G).
  \end{align*}
 Now under the homomorphism $\Xbasetwo^{\otimes n}\to A^{\otimes n}$, for each $i<j$ the $ij$th entry of the matrix $F-\transpose{F}$ maps as $F_{ij}-F_{ji} \mapsto f(a_i,a_j) - f(a_j,a_i)$, and the $ij$th entry of $F-G$ maps as $F_{ij}-G_{ij} \mapsto f(a_i,a_j)-g(a_i,a_j)$.
 Thus
 \begin{align*}
  \Sigma-\Sigma' &\mapsto \Pi\left(-2,\bigl(f(a_i,a_j) - f(a_j,a_i)\bigr)_{i,j=1}^n,\bigl(f(a_i,a_j)-g(a_i,a_j)\bigr)_{i,j=1}^n\right)\\ 
  &= \kappa_{fg}(a_1\wedge\dots\wedge a_n).
 \end{align*}
 Thus the assignments $1\mapsto 1$ and $s_f(a_1\wedge\dots\wedge a_n)\mapsto \sigma_f(a_1,\dots, a_n)$ constitute a well-defined $R$-module homomorphism $\loosdiscquad{Q}\to \discalg(A)$.
 
 To show that this homomorphism is an $R$-module isomorphism, merely note that it fits into a map of short exact sequences
 \[\xymatrix{
  0\ar[r] & R \ar[r]\ar@{=}[d] & \loosdiscquad{Q} \ar[d] \ar[r] & \extpower^n A \ar[r]\ar@{=}[d] & 0 \\
  0\ar[r] & R \ar[r] & \discalg(A) \ar[r] & \extpower^n A \ar[r] & 0
}\]
 since it sends $1\mapsto 1$ and for each $a_1,\dots,a_n\in A$, we have both $s_f(a_1\wedge\dots\wedge a_n)\mapsto a_1\wedge\dots\wedge a_n$ and $\sigma_f(a_1,\dots,a_n)\mapsto a_1\wedge\dots a_n$; 
 indeed, the difference between $\Sigma_F$ and $\gamma(x_1,\dots, x_n)$ in $\Xbase^{\otimes n}$ is $\permgroup{n}$-invariant, hence $\sigma_f(a_1,\dots,a_n)$ and $\gamma(a_1,\dots,a_n)$ differ by an element of $R$ in $\discalg(A)$ and so have the same image in $\extpower^n A$, namely $a_1\wedge\dots\wedge a_n$.  
\end{proof}

\begin{theorem}\label{loos-iso}
 Let $R$ be a ring, and let $A$ be a rank-$n$ $R$-algebra with $n\geq 2$ and even. 
 The $R$-module isomorphism $\loosdiscalg(A)\cong\loosdiscquad{Q}\to\discalg(A)$ of \cref{loos-mod-iso} is an $R$-algebra isomorphism $\loosdiscalg(A)\simto \discalg(A)$.
\end{theorem}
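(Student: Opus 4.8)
The plan is to deduce the theorem from \cref{trace-norm}, applied to the $R$-module isomorphism $\phi$ of \cref{loos-mod-iso} regarded now as a map $\phi\colon\loosdiscalg(A)\to\discalg(A)$. This is legitimate because $\loosdiscalg(A)=\loosdiscquad{Q}+(-1)^{\frac{n}{2}+1}\floor{n/4}\,\disc_{A}$ has the same underlying $R$-module, the same identity element, and hence the same quotient $\dot{\loosdiscalg(A)}=\extpower^n A$ as $\loosdiscquad{Q}$, so $\phi$ remains an identity-preserving module isomorphism. I would take $\Omega\subset\loosdiscalg(A)$ to be the set of all $s_f(a_1\wedge\dots\wedge a_n)$, as $f$ ranges over bilinear representatives of $Q=s_2$ and $(a_1,\dots,a_n)$ over $A^n$; their images generate $\dot{\loosdiscalg(A)}$, so by \cref{trace-norm} it suffices to check that $\phi$ preserves traces and norms on $\Omega$. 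Writing $u=a_1\wedge\dots\wedge a_n$ and using that a shift changes neither traces nor the identity, the target values on the Loos side are $\trace_{\loosdiscalg(A)}(s_f(u))=\Pf\bigl(f(a_i,a_j)-f(a_j,a_i)\bigr)_{ij}$ and $\norm_{\loosdiscalg(A)}(s_f(u))=(-1)^{\frac{n}{2}+1}\qdet\bigl(f(a_i,a_j)\bigr)_{ij}+(-1)^{\frac{n}{2}+1}\floor{n/4}\,\disc_{A}(u,u)$.

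For traces, conjugation on $\discalg(A)=\fixpower{A}{n}{\altgroup{n}}\otimes_{\fixpower{A}{n}{\permgroup{n}}}R$ is induced by the action of a transposition $\tau\in\permgroup{n}$ on the $n$ tensor factors, and the homomorphism $\fixpower{\Xbase}{n}{\altgroup{n}}\to\fixpower{A}{n}{\altgroup{n}}\to\discalg(A)$ of \cref{rmk-homom-to-discalg} intertwines $\tau$ with this conjugation. The entries of the matrix $F$ of \cref{def-Qi-Bij-Fij} are $\permgroup{n}$-invariant, so $\tau$ fixes $\Pf(F-\transpose{F})$ and negates $\det(\conjugate{x_i}{j})_{ij}$; since $2$ is a non-zerodivisor in $\fixpower{\Xbase}{n}{\altgroup{n}}\subset\Xbase^{\otimes n}$, the equation $2\Sigma_F=\det(\conjugate{x_i}{j})_{ij}+\Pf(F-\transpose{F})$ of \cref{lemma-Sigma-existence} gives $\tau(\Sigma_F)=\Pf(F-\transpose{F})-\Sigma_F$. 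Pushing this to $\discalg(A)$, where $\Sigma_F\mapsto\sigma_f(a_1,\dots,a_n)$ and $\Pf(F-\transpose{F})\mapsto\Pf(f(a_i,a_j)-f(a_j,a_i))_{ij}$, yields $\overline{\sigma_f(a_1,\dots,a_n)}=\Pf(f(a_i,a_j)-f(a_j,a_i))_{ij}-\sigma_f(a_1,\dots,a_n)$, hence $\trace_{\discalg(A)}(\sigma_f(a_1,\dots,a_n))=\Pf(f(a_i,a_j)-f(a_j,a_i))_{ij}$, matching the target.

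The norm equality is the main obstacle. By the trace computation, $\norm_{\discalg(A)}(\sigma_f(a_1,\dots,a_n))=\sigma_f(a_1,\dots,a_n)\,\overline{\sigma_f(a_1,\dots,a_n)}$ is the image of $\Sigma_F\,\tau(\Sigma_F)$, which is $\tau$-invariant, hence $\permgroup{n}$-invariant, so lies in $\fixpower{\Xbase}{n}{\permgroup{n}}$ and maps to $\ferrand_{A/R}\bigl(\chi^{\otimes n}(\Sigma_F\tau(\Sigma_F))\bigr)$. On the Loos side, $\qdet\bigl(f(a_i,a_j)\bigr)_{ij}=\ferrand_{A/R}\bigl(\chi^{\otimes n}(\qdet(F))\bigr)$, since $\qdet$ is an integral polynomial in the matrix entries and $F$ maps entrywise to $\bigl(f(a_i,a_j)\bigr)_{ij}$, while $\disc_{A}(u,u)=\det\bigl(\trace_{A}(a_ia_j)\bigr)_{ij}=\ferrand_{A/R}\bigl(\chi^{\otimes n}(\delta_0^2)\bigr)$ for $\delta_0:=\det(\conjugate{x_i}{j})_{ij}$, because $\trace_A(a_ia_j)=\ferrand_{A/R}(e_1(a_ia_j))$, the matrix $\bigl(e_1(a_ia_j)\bigr)_{ij}$ equals $X_A\transpose{X_A}$ for $X_A=(\conjugate{a_i}{j})_{ij}$, and $\det$ commutes with $\ferrand_{A/R}$. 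Thus the norm equality for all data $(R,A,f,a_1,\dots,a_n)$ follows from the single universal identity
\[\Sigma_F\,\tau(\Sigma_F)=(-1)^{\frac{n}{2}+1}\qdet(F)+(-1)^{\frac{n}{2}+1}\floor{n/4}\,\delta_0^2\qquad\text{in }\fixpower{\Xbase}{n}{\permgroup{n}}.\]

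To prove this identity, I would observe that $\fixpower{\Xbase}{n}{\permgroup{n}}$ is $2$-torsion-free, being a subring of the polynomial ring $\Xbase^{\otimes n}$ over $\Z$, so it is harmless to invert $2$. There $\Sigma_F=\tfrac12(\delta_0+\Pf(F-\transpose{F}))$ and $\tau(\Sigma_F)=\tfrac12(\Pf(F-\transpose{F})-\delta_0)$, so $\Sigma_F\tau(\Sigma_F)=\tfrac14(\det(F-\transpose{F})-\delta_0^2)$; feeding this and $4\qdet(F)=\det(F+\transpose{F})+(-1)^{\frac{n}{2}+1}\det(F-\transpose{F})$ into the identity and cancelling, it reduces to $\det(F+\transpose{F})=\bigl((-1)^{n/2}-4\floor{n/4}\bigr)\delta_0^2=(1-n)\,\delta_0^2$. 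This last is immediate: writing $X=(\conjugate{x_i}{j})_{ij}$ (so $\det X=\delta_0$) and $J$ for the all-ones $n\times n$ matrix, one checks entrywise that $(F+\transpose{F})_{ij}=e_1(x_i)e_1(x_j)-e_1(x_ix_j)=\bigl(X(J-I)\transpose{X}\bigr)_{ij}$, whence $\det(F+\transpose{F})=(\det X)^2\det(J-I)=(1-n)\,\delta_0^2$, since $J-I$ has eigenvalues $n-1$ and $-1$ (the latter with multiplicity $n-1$) and $n$ is even. With the trace and norm equalities established, \cref{trace-norm} shows that $\phi$ is an $R$-algebra isomorphism $\loosdiscalg(A)\simto\discalg(A)$. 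The only genuine work is in the norm step, and the bulk of that is the bookkeeping that reduces it to a universal identity over the $2$-torsion-free ring $\fixpower{\Xbase}{n}{\permgroup{n}}$; the surviving determinant identity itself is elementary.
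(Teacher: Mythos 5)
Your proof is correct and follows the same overall strategy as the paper's: reduce via \cref{trace-norm} to checking traces and norms of the generators $s_f(a_1\wedge\dots\wedge a_n)$, and compute the trace and norm of $\sigma_f(a_1,\dots,a_n)$ as the images of $\Sigma_F+\tau(\Sigma_F)$ and $\Sigma_F\cdot\tau(\Sigma_F)$, using $\Pf(F-\transpose{F})^2=\det(F-\transpose{F})$ and $4\floor{n/4}=n-1+(-1)^{n/2}$. The one place you genuinely diverge is the determinant identity to which everything reduces: $\det(F+\transpose{F})=\det(B_{ij})_{i,j}=(1-n)\det\bigl(e_1(x_ix_j)\bigr)_{i,j}$. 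The paper proves this as \cref{lemma-disc-identity} by passing to $\Q[x_1,\dots,x_n]^{\otimes n}$, showing $\det(D-\lambda v\transpose{v})$ is linear in $\lambda$, and exhibiting a rank drop of an auxiliary $n\times\binom{n}{2}$ matrix through a graph-cycle argument. You instead observe the factorization $F+\transpose{F}=X(J-I)\transpose{X}$, where $X=\bigl(\conjugate{x_i}{j}\bigr)_{i,j}$ and $J$ is the all-ones matrix, so that
\[
\det(F+\transpose{F})=\det(X)^2\det(J-I)=(-1)^{n-1}(n-1)\det\bigl(e_1(x_ix_j)\bigr)_{i,j},
\]
which for $n$ even is exactly what is needed (and recovers the paper's sign $(-1)^n(1-n)$ for general $n$). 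This is shorter, works over $\Z$ directly with no division by $n$ or case $n=0$, and replaces the appendix computation with a one-line identity; your remaining bookkeeping --- inverting $2$ in the $2$-torsion-free ring $\fixpower{\Xbase}{n}{\permgroup{n}}$ and identifying $\disc_A(u,u)$ with the image of $\det(X)^2=\det\bigl(e_1(x_ix_j)\bigr)_{i,j}$ --- matches the paper's.
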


\begin{proof}
 By \cref{trace-norm}, to show that the the $R$-module isomorphism $\loosdiscalg(A)\to \discalg(A)$ is an isomorphism of algebras, it is enough to check that it preserves traces and norms of elements of the form $s_f(a_1\wedge\dots\wedge a_n)$, since the images of these in $\loosdiscalg(A)/R\cong \extpower^n A$ are a generating set. 
 First we check that the traces of the two elements $\sigma_f(a_1,\dots,a_n)\in\discalg(A)$ and $s_f(a_1\wedge\dots\wedge a_n)\in\loosdiscalg(A)$ agree.  
 The trace of $s_f(a_1\wedge\dots\wedge a_n)$ in $\loosdiscquad{Q}$ is defined to be $\Pf\bigl(f(a_i,a_j)-f(a_j,a_i)\bigr)_{i,j=1}^n$, and this trace is preserved under the shift to $\loosdiscalg(A)$. 
 To compute the trace of $\sigma_f(a_1,\dots,a_n)$, we recall that the trace of an element of a quadratic algebra is the sum of it and its conjugate, i.e.\ the image of it under the standard involution.
 The standard involution on $\discalg(A)$ descends from the action of a transposition on the tensor factors of $\fixpower{A}{n}{\A{n}}$, so we may compute the trace of $\sigma_f(a_1,\dots,a_n)$ as the image in $\discalg(A)$ of $\Sigma_F+\tau(\Sigma_F)$, where $\tau\colon \fixpower{X_1}{n}{\A{n}}\to\fixpower{X_1}{n}{\A{n}}$ is the action of a transposition.  
 This sum is
 \begin{align*}
  \Sigma_F+\tau(\Sigma_F) &= \frac12\left(\det\bigl(\conjugate{ x_i}{j}\bigr)_{i,j=1}^n + \Pf(F-\transpose{F})\right)\\
  &\qquad + \frac12\left(-\det\bigl(\conjugate{ x_i}{j}\bigr)_{i,j=1}^n + \Pf(F-\transpose{F})\right)\\&=
  \frac12\cdot 2\,\Pf(F-\transpose{F}) = \Pf(F-\transpose{F}).
 \end{align*}
 Hence the trace of $\sigma_f(a_1,\dots,a_n)$ is the image of $\Pf(F-\transpose{F})$ in $\discalg(A)$, namely $\Pf\bigl(f(a_i,a_j)-f(a_j,a_i)\bigr)_{i,j=1}^n$ as desired.

Next, we calculate the norms of $\sigma_f(a_1,\dots,a_n)$ in $\discalg(A)$ and of $s_f(a_1\wedge\dots\wedge a_n)$ in $\loosdiscalg(A)$. 
The norm of $\sigma_f(a_1,\dots,a_n)$ may be calculated in the same manner as its trace, as the image of $\Sigma_F\cdot\tau(\Sigma_F)$ under the homomorphism $\fixpower{\Xbase}{n}{\altgroup{n}}\to\discalg(A)$.
 We obtain
\begin{align*}
  \Sigma_F\cdot\tau(\Sigma_F) &= \frac12\left(\det\bigl(\conjugate{x_i}{j}\bigr)_{i,j=1}^n + \Pf(F-\transpose{F})\right)\\
  &\qquad \cdot \frac12\left(-\det\bigl(\conjugate{x_i}{j}\bigr)_{i,j=1}^n + \Pf(F-\transpose{F})\right)\\&=
  \frac14\left(-\left(\det\bigl(\conjugate{x_i}{j}\bigr)_{i,j=1}^n\right)^2 + \Pf(F-\transpose{F})^2\right)\\
  &=\frac14\left(-\det\left(\sum_{k=1}^n\conjugate{x_i}{k}\conjugate{x_j}{k}\right)_{i,j=1}^n + \det(F-\transpose{F})\right)\\
  &=\frac14\left(-\det\bigl(e_1(x_ix_j)\bigr)_{i,j=1}^n + \det(F-\transpose{F})\right),
 \end{align*}
  and the norm of $\sigma_f(a_1,\dots,a_n)$ is therefore the image of this quantity in $\discalg(A)$.
  
 On the other hand, the norm of $s_f(a_1\wedge\dots\wedge a_n)$ in $\loosdiscquad{Q}$ is defined as
 \[
 \norm_{\loosdiscquad{Q}}\bigl(s_f(a_1\wedge\dots\wedge a_n)\bigr) = (-1)^{\frac{n}2 + 1}\qdet\bigl(f(a_i,a_j)\bigr)_{i,j=1}^n,
 \]
 which is the image of
 \begin{align*}
  (-1)^{\frac{n}2+1}\qdet(F) &= \frac14\left((-1)^{\frac{n}2+1}\det(F+\transpose{F}) + \det(F-\transpose{F})\right)\\
  &= \frac14\left((-1)^{\frac{n}2+1}\det\bigl(B_{ij}\bigr)_{i,j=1}^n + \det(F-\transpose{F})\right),
 \end{align*}
 since $F_{ij} + F_{ji} = B_{ij}$ for all $i,j\in\set{n}$.
 Now the norm of $s_f(a_1\wedge\dots\wedge a_n)$ in $\loosdiscalg(A)$ is $(-1)^{\frac{n}2+1}\floor{n/4}\disc_{A}(a_1\wedge\dots\wedge a_n,a_1\wedge\dots\wedge a_n)$ more than its norm in $\loosdiscquad{Q}$, and is hence the image of
 \begin{align*}
  (-1)^{\frac{n}2+1}\floor{\frac{n}4}\det\bigl(e_1(x_ix_j)\bigr)_{i,j=1}^n &+ \frac14\left((-1)^{\frac{n}2+1}\det\bigl(B_{ij}\bigr)_{i,j=1}^n + \det(F-\transpose{F})\right).
 \end{align*}
 Now we claim that the following identity holds in the polynomial ring $\Xbase^{\otimes n}$; 
 it is proved as Lemma~\ref{lemma-disc-identity}.
 \[
 \det\bigl(B_{ij}\bigr)_{i,j=1}^n = (1-n)\det\bigl(e_1(x_ix_j)\bigr)_{i,j=1}^n.
 \]
 Then the norm of $s_f(a_1\wedge\dots\wedge a_n)$ in $\loosdiscalg(A)$ is the image of 
 \begin{align*}
 &\quad\ \frac14\left((-1)^{\frac{n}2+1}\left(4\floor{\frac{n}4}\det\bigl(e_1(x_ix_j)\bigr)_{i,j=1}^n + \det(B_{ij})_{i,j=1}^n\right) + \det(F-\transpose{F})\right)\\
 &= \frac14\left((-1)^{\frac{n}2+1}\left(4\floor{\frac{n}4} + (1-n)\right)\det\bigl(e_1(x_ix_j)\bigr)_{i,j=1}^n + \det(F-\transpose{F})\right),
\end{align*}
and since $4\floor{n/4}=n-1 + (-1)^{\frac{n}2}$ for $n$ even, this simplifies to
\[
\frac14\left(-\det\bigl(e_1(x_ix_j)\bigr)_{i,j=1}^n + \det(F-\transpose{F})\right),
\]
 whose image in $R$ we have already shown to be the norm of $\sigma_f(a_1,\dots,a_n)$ in $\discalg(A)$, as desired.
 
 Therefore the trace and norm of $s_f(a_1\wedge\dots\wedge a_n)$ in $\loosdiscalg(A)$ and of $\sigma_f(a_1,\dots,a_n)$ in $\discalg(A)$ agree, so the $R$-module isomorphism $\loosdiscalg(A)\to\discalg(A)$ sending $1\mapsto 1$ and $s_f(a_1\wedge\dots\wedge a_n)\mapsto \sigma_f(a_1,\dots,a_n)$ is an $R$-algebra isomorphism, as desired.
\end{proof}

\appendix
\section{Miscellaneous Calculations}
\label{misc}

In this appendix, we perform various computations used to support claims made in the main text.

\subsection{Traces and norms in rank 3}
First, we show that the module isomorphism $D(A)\to\discalg(A)$ of \cref{norm-in-delta-is-equal-to-Rost} preserves traces and norms by means of the following identity:

\begin{lemma}\label{relations-for-sk}
 Let $A$ be an $R$-algebra of rank $3$ with elements $a,b\in A$.
 Then 
 \begin{align*}
 s_1(a)s_1(b) &= s_1(ab) + s_2(a+b) - s_2(a) - s_2(b)\\
 s_1(a)^2 &= s_1(a^2) + 2s_2(a)\\
s_1(a)s_1(ab)s_1(b) &= s_1(a^2b) s_1(b) +s_1(a) s_1(ab^2)   - s_1(a^2b^2)+ s_2(ab) + s_2(a)s_2(b).
\end{align*}
\end{lemma}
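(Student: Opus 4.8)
The plan is to reduce all three identities to polynomial identities in a polynomial ring over $\Z$ and then verify those by direct expansion. The crucial observation is that every term occurring in the three equations is the image, under the Ferrand homomorphism $\ferrand_{A/R}\colon\fixpower{A}{3}{\S{3}}\to R$, of an element of $\fixpower{A}{3}{\S{3}}$ assembled from the pairwise-commuting elements $\conjugate{a}{i},\conjugate{b}{i}\in A^{\otimes 3}$ for $i\in\{1,2,3\}$. Indeed $s_k(c)=\ferrand_{A/R}(e_k(c))$ for every $c\in A$, and $\conjugate{(cd)}{i}=\conjugate{c}{i}\conjugate{d}{i}$; since $\ferrand_{A/R}$ is a ring homomorphism, products of $s$-values become $\ferrand_{A/R}$ of products computed in $A^{\otimes 3}$. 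For instance
\[
 s_1(a)\,s_1(ab)\,s_1(b)=\ferrand_{A/R}\!\left(\Bigl(\sum_i\conjugate{a}{i}\Bigr)\Bigl(\sum_j\conjugate{a}{j}\conjugate{b}{j}\Bigr)\Bigl(\sum_k\conjugate{b}{k}\Bigr)\right),
\]
and likewise $s_2(a)s_2(b)=\ferrand_{A/R}(e_2(a)e_2(b))$, $s_1(a^2b^2)=\ferrand_{A/R}\bigl(\sum_i\conjugate{(a^2b^2)}{i}\bigr)$, $s_2(a+b)=\ferrand_{A/R}(e_2(a+b))$, and so on for every summand of the three equations.

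Hence it suffices to prove the three corresponding identities among elements of $A^{\otimes 3}$, and for that it is enough to prove them in the polynomial ring $\Z[a_1,a_2,a_3,b_1,b_2,b_3]$: its universal property furnishes a ring homomorphism onto the commutative subring of $A^{\otimes 3}$ generated by the $\conjugate{a}{i}$ and $\conjugate{b}{i}$, sending $a_i\mapsto\conjugate{a}{i}$ and $b_i\mapsto\conjugate{b}{i}$, and each element appearing above is the image of a polynomial in the $a_i,b_i$ under this homomorphism. Applying $\ferrand_{A/R}$ to a verified polynomial identity then yields the desired equation in $R$. (One could alternatively specialize to the split cubic algebra, but this route avoids any faithful-flatness bookkeeping; the hypothesis that $A$ has rank exactly $3$ enters only through there being three tensor factors, i.e.\ three summation indices.)

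With this reduction in hand, the first two identities are nearly automatic: expanding $\bigl(\sum_i\conjugate{a}{i}\bigr)\bigl(\sum_j\conjugate{b}{j}\bigr)$ and $\bigl(\sum_i\conjugate{a}{i}\bigr)^2$ and recognizing $\sum_{i<j}\bigl(\conjugate{a}{i}\conjugate{b}{j}+\conjugate{b}{i}\conjugate{a}{j}\bigr)=e_2(a+b)-e_2(a)-e_2(b)$ gives them immediately. The third identity is the one requiring real work. I would expand the left-hand side $\bigl(\sum_i a_i\bigr)\bigl(\sum_j a_jb_j\bigr)\bigl(\sum_k b_k\bigr)$ in $\Z[a_1,\dots,b_3]$, group the resulting monomials according to the pattern of coincidences among the three summation indices, and match them term by term against the similarly expanded right-hand side. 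The main obstacle is simply this bookkeeping; the only point where having exactly three indices matters is that the ``all three indices distinct'' monomials $a_ia_jb_jb_k$ produced on the left correspond precisely to the off-diagonal part of $\bigl(\sum_{i<j}a_ia_j\bigr)\bigl(\sum_{k<l}b_kb_l\bigr)=s_2(a)s_2(b)$ on the right --- a correspondence that works because any two distinct two-element subsets of $\{1,2,3\}$ meet in exactly one point.
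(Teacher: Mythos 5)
Your proposal is correct and follows essentially the same route as the paper: both reduce each identity, via the fact that $s_k(c)=\ferrand_{A/R}(e_k(c))$ and that $\ferrand_{A/R}$ is a ring homomorphism, to an identity among the $e_k$'s in $A^{\otimes 3}$, and then verify that identity by expanding products of sums of the $\conjugate{a}{i}$, $\conjugate{b}{i}$ and grouping monomials by the coincidence pattern of their indices (your detour through $\Z[a_1,a_2,a_3,b_1,b_2,b_3]$ is only a cosmetic formalization of this expansion). Your key observation for the third identity --- that the all-indices-distinct monomials on the left match $e_2(a)e_2(b)-e_2(ab)$ because two distinct $2$-subsets of $\{1,2,3\}$ meet in exactly one point --- is precisely the final step of the paper's computation.
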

\begin{proof}
 It is possible to prove these with the expressions for $s_k(a+b)$ shown by Christophe Reutenauer and Marcel-Paul Sch{\"u}tzenberger in \cite{DeterminantSum}, but we present here a different argument.
 
 For each $x\in A$, $s_k(x)$ is the image under $\ferrand_{A/R}\colon \fixpower{A}{3}{\S{3}}\to R$ of the element $e_k(x)\in \fixpower{A}{3}{\S{3}}$.
 Thus it suffices to check that the corresponding identities in the $e_k(x)$ hold in $A^{\otimes 3}$.
 
 For example,
 \begin{align*}
  e_1(a)e_1(b) &= (a\otimes 1\otimes 1+\dots)(b\otimes 1\otimes 1 + \dots)\\
  &= (ab\otimes 1\otimes 1 + \dots) + (a\otimes b\otimes 1 + \dots),
 \end{align*}
 where $+\dots$ denotes the sum over all ways of forming a pure tensor out of the same tensor factors (thus the first three parenthesized expressions contain $3$ terms each, and the fourth contains $6$).
 Now $(ab\otimes 1\otimes 1 + \dots) = e_1(ab)$, and the remainder is also the difference between $e_2(a+b) = (a+b)\otimes(a+b)\otimes 1 + \dots$ and $e_2(a)+e_2(b) = (a\otimes a\otimes 1 + \dots) + (b\otimes b\otimes1 + \dots)$.
 So $e_1(a)e_1(b) = e_1(ab) + e_2(a+b)-e_2(a)-e_2(b)$.
 
 The second identity follows from the first by setting $a=b$.
 
 We handle the third identity similarly to the first.
 Expanding out $e_1(a)e_1(ab)e_1(b)$ gives
 \begin{align*}
  e_1(a)e_1(ab)e_1(b) &= (a\otimes1\otimes 1+\dots)(ab\otimes1\otimes 1+\dots)(b\otimes1\otimes 1+\dots)\\
  &= (a^2b^2\otimes1\otimes1 + \dots)\\
  &\qquad + (a^2b \otimes b \otimes 1 + \dots)\\
  &\qquad + (a\otimes ab^2 \otimes 1 + \dots)\\
  &\qquad + 2(ab\otimes ab \otimes 1 + \dots)\\
  &\qquad + (a\otimes ab \otimes b+ \dots)\\
  &= e_1(a^2b^2) + \bigl(e_1(a^2b)e_1(b)-e_1(a^2b^2)\bigr)\\
  &\qquad + \bigl(e_1(a)e_1(ab^2)-e_1(a^2b^2)\bigr)+ 2\bigl(e_2(ab)\bigr)\\
  &\qquad  + (a\otimes ab\otimes b+\dots).
 \end{align*}
 That last term also appears in the expansion of $e_2(a)e_2(b)$:
 \begin{align*}
  e_2(a)e_2(b) &= (a\otimes a\otimes 1 + \dots)(b\otimes b \otimes 1 + \dots)\\
  &= (ab\otimes ab \otimes 1 + \dots) + (a\otimes ab \otimes b + \dots)\\
  &= e_2(ab) + (a\otimes ab \otimes b+\dots).
 \end{align*}
 Hence $e_1(a)e_1(ab)e_1(b) = e_1(a^2b)e_1(b) + e_1(a)e_1(ab^2) - e_1(a^2b^2) + e_2(ab) +e_2(a)e_2(b)$.
\end{proof}


We will also use the following fact:
\begin{lemma}\label{std-involution}
Let $R$ be a ring and $D$ a quadratic $R$-algebra.
If $\tau\colon D\to D$ is an $R$-module homomorphism with the properties that
\begin{enumerate}
	\item $\tau(1)=1$, and
	\item $a\cdot\tau(a)\in R$ for all $a\in D$,
\end{enumerate}
then $\tau$ is an $R$-algebra automorphism of $D$, and for all $a\in D$, we have $\trace_D(a) = a+\tau(a)$ and $\norm_D(a) = a\cdot\tau(a)$.
\end{lemma}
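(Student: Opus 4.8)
The plan is to work locally on $\Spec R$ and identify $\tau$ explicitly with the standard involution of $D$. Since $D$ is a quadratic $R$-algebra, the quotient $\dot D = D/R$ is locally free of rank $1$ and the structure map $R\to D$ is split injective; so after replacing $R$ by a suitable localization we may pick $x\in D$ lifting a generator of $\dot D$ and obtain a presentation $D\cong R[x]/(x^2-sx+m)$ with $\{1,x\}$ an $R$-basis. All three conclusions of the lemma concern $R$-linear maps and polynomial laws and may be checked locally, so it suffices to treat this case and then glue.

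Since $\tau$ is $R$-linear with $\tau(1)=1$, it is determined by $\tau(x)=p+qx$ for some $p,q\in R$. Imposing the hypothesis $x\cdot\tau(x)\in R$ and computing $x(p+qx)=-qm+(p+qs)x$ forces $p+qs=0$; then imposing $(1+x)\cdot\tau(1+x)\in R$ and computing $(1+x)\bigl((1+p)+qx\bigr)=(1+p-qm)+(q+1)x$ (using $p+qs=0$) forces $q=-1$, whence $p=s$. So $\tau$ is the substitution $x\mapsto s-x$; equivalently $\tau(a)=\trace_D(a)-a$ for all $a\in D$, since $\trace_D(1)=2$ and $\trace_D(x)=s$. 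In particular $\tau$ agrees globally with the $R$-linear map $a\mapsto\trace_D(a)-a$, since the two coincide on an open cover of $\Spec R$.

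Finally we read off the conclusions for $\tau\colon x\mapsto s-x$ on $R[x]/(x^2-sx+m)$. It is a well-defined $R$-algebra homomorphism because $(s-x)^2-s(s-x)+m=x^2-sx+m=0$, and it is visibly its own inverse, hence an $R$-algebra automorphism; this multiplicativity, being a local condition on the fixed $R$-linear endomorphism $a\mapsto\trace_D(a)-a$, holds over $R$. For any $a\in D$ we have $a+\tau(a)=\trace_D(a)$ by construction, and $a\cdot\tau(a)=a\bigl(\trace_D(a)-a\bigr)=\trace_D(a)\,a-a^2=\norm_D(a)$ by the Cayley--Hamilton relation $a^2-\trace_D(a)\,a+\norm_D(a)=0$ for the rank-$2$ algebra $D$; again these identities hold on an open cover of $\Spec R$, hence over $R$.

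There is no serious obstacle here: the proof is a short local computation. The one point that deserves a word rather than bare algebra is the multiplicativity of $\tau$, which becomes transparent once $\tau$ is exhibited as the substitution $x\mapsto s-x$ preserving the defining relation of $D$.
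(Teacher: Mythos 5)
Your proof is correct. Note that the paper does not actually prove this lemma itself --- its ``proof'' is a citation to Lemmas 2.9 and 2.13 of Voight's paper --- so your self-contained argument is a genuine alternative rather than a reproduction. The computation is sound: localizing so that $D\cong R[x]/(x^2-sx+m)$ with basis $\{1,x\}$, the two instances $a=x$ and $a=1+x$ of the hypothesis $a\cdot\tau(a)\in R$ do pin down $\tau(x)=s-x$ exactly (the equations $p+qs=0$ and $q+1=0$ are exact identities in $R$, so no flatness or zero-divisor issues arise), and the identification $\tau(a)=\trace_D(a)-a$ then glues because two $R$-linear endomorphisms agreeing on a basic open cover agree. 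The remaining verifications --- that $x\mapsto s-x$ preserves the defining relation, and that $a\cdot\tau(a)=\trace_D(a)\,a-a^2=\norm_D(a)$ via Cayley--Hamilton for the rank-$2$ algebra --- are all local conditions on globally defined maps, as you say. One small point worth making explicit if this were to be written up: the hypothesis $a\cdot\tau(a)\in R$ does survive localization (an element of $D_f$ has the form $a/f^k$, and $(a/f^k)\tau(a/f^k)=(a\cdot\tau(a))/f^{2k}$ lies in $R_f$), so the two test elements $x$ and $1+x$ of the localized algebra are legitimately subject to the hypothesis. The payoff of your route is that the paper becomes independent of the external reference at the cost of half a page of elementary algebra.
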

\begin{proof}
 See \cite[Lemmas 2.9 and 2.13]{Voight11}.
\end{proof}
For example, if $A$ is an $R$-algebra of rank $n\geq 2$, then the automorphism of $A^{\otimes n}$ given by interchanging the first two tensor factors induces an involution on $\discalg(A)$ satisfying the hypotheses of \cref{std-involution}, so we can deduce (for example) that
\[\trace_{\discalg(A)}\bigl(\gamma(a_1,\dots,a_n)\bigr) = \gamma(a_1,a_2,\dots,a_n) +\gamma(a_2,a_1,\dots,a_n).\]
Similarly, given any quadratic form $q$ on a locally-free rank-$2$ module $M$, the even Clifford algebra $C_0(q) \cong R\oplus M^{\otimes 2} / (m\otimes m - q(m) : m\in M)$ has an involution sending $1\mapsto 1$ and elements of the form $m_1\otimes m_2$ to $m_2\otimes m_1$.
This involution satisfies the hypotheses of \cref{std-involution}, so the norm (for example) of $m_1\otimes m_2$ is
\begin{align*}
\norm_{C_0(q)}(m_1\otimes m_2) &= (m_1\otimes m_2)(m_2\otimes m_1)\\
&= m_1\otimes m_2\otimes m_2\otimes m_1\\
&= (m_1\otimes m_1) \cdot q(m_2) = q(m_1)q(m_2).
\end{align*}

Now we in a position to prove \cref{norm-in-delta-is-equal-to-Rost}, that the module isomorphism $\discalg(A)\cong K(A)$ of \cref{Rost-module-iso} becomes an \emph{algebra} isomorphism when we shift the multiplication of $K(A)$ by $-\disc_A$.

\begin{proof}[Proof of \cref{norm-in-delta-is-equal-to-Rost}]
By \cref{trace-norm}, it is enough to show that the $R$-module isomorphism $D(A)\to \discalg(A)$ preserves $1$ (which is immediate), as well as traces and norms of elements of the form $\dot a\otimes \dot b$.
Given such an element, its trace in $D(A)$ is 
\begin{align*}
\trace_{D(A)}(\dot a\otimes \dot b) &= \dot a \otimes \dot b + \dot b \otimes \dot a\\
&= (\dot a + \dot b)\otimes (\dot a + \dot b) - \dot a \otimes \dot a - \dot b \otimes \dot b\\
&= q(\dot a + \dot b) - q(\dot a) - q(\dot b)\\
&= s_1\bigl((a+b)^2\bigr) - s_2(a+b) - s_1(a^2) + s_2(a) - s_1(b^2) + s_2(b)\\
&= s_1(a^2 + 2ab + b^2) - s_1(a^2) - s_1(b^2) - \bigl(s_2(a+b) - s_2(a) - s_2(b)\bigr)\\
&= s_1(2ab) - \bigl(s_1(a)s_1(b) - s_1(ab)\bigr)\\
&= 3s_1(ab) - s_1(a)s_1(b).
\end{align*}
On the other hand, its image $s_1(ab) - \gamma(1,b,a)$ in $\discalg(A)$ has trace
\begin{align*}
\trace_{\discalg(A)}\bigl(s_1(ab) - \gamma(1,b,a)\bigr) &= 2 s_1(ab) - \gamma(1,b,a)-\gamma(1,a,b)\\
 &= 2s_1(ab) - (s_1(a)s_1(b) - s_1(ab))\\
 &= 3s_1(ab) - s_1(a)s_1(b),
\end{align*}
so the isomorphism $D(A)\to \discalg(A)$ preserves the traces of elements of the form $\dot a\otimes \dot b$.

Now the norms of $K(A)$ and $D(A)$ differ; the equation $\norm_{D(A)}(\dot a\otimes \dot b) = \norm_{\discalg(A)}\bigl(s_1(ab)-\gamma(1,b,a)\bigr)$ expands to
\[\begin{gathered}\norm_{K(A)}(\dot a\otimes \dot b) - \disc_A(1\wedge a\wedge b,1\wedge a\wedge b) = \norm_{\discalg(A)}\bigl(s_1(ab)-\gamma(1,b,a)\bigr)\\
= s_1(ab)^2 - s_1(ab)\trace_{\discalg(A)}\bigl(\gamma(1,b,a)\bigr)+\norm_{\discalg(A)}\bigl(\gamma(1,b,a)\bigr).
\end{gathered}\]
We compute each of these terms in turn.
Now the norm of $\dot a \otimes \dot b$ in $K(A)$ is
\begin{align*}
\norm_{K(A)}(\dot a\otimes \dot b) &= (\dot a \otimes \dot b)(\dot b \otimes \dot a)= \dot a\otimes\dot b\otimes\dot b \otimes \dot a= q(\dot b)(\dot a\otimes\dot a)= q(\dot a)q(\dot b)\\
&= (s_1(a^2)-s_2(a))(s_1(b^2)-s_2(b))\\
&= s_1(a^2)s_1(b^2) - s_2(a)s_1(b^2) - s_1(a^2)s_2(b) + s_2(a)s_2(b).
\end{align*}
Next, we expand $\disc_A(1\wedge a\wedge b,1\wedge a\wedge b)$ as
\begin{align*}
 \det\begin{pmatrix}
  3 & s_1(a) & s_1(b) \\
  s_1(a) & s_1(a^2) & s_1(ab)\\
  s_1(b) & s_1(ab) & s_1(b^2)
 \end{pmatrix}
  &= \begin{array}{l}3s_1(a^2)s_1(b^2) - 3s_1(ab)^2\\
  \ -\ s_1(a)^2s_1(b^2) - s_1(a^2)s_1(b)^2 \\
  \ +\ 2s_1(a)s_1(ab)s_1(b).
  \end{array}
\end{align*}
We can use \cref{relations-for-sk} to expand $s_1(a)s_1(ab)s_1(b)$ and terms of the form $s_1(\,\cdot\,)^2$, and then simplify:
 \begin{align*} &\!\begin{array}{l}=\ 3s_1(a^2)s_1(b^2) - 3\bigl(s_1(a^2b^2)+2s_2(ab)\bigr)\\
  \quad -\ \bigl(s_1(a^2)+2s_2(a)\bigr)s_1(b^2) - s_1(a^2)\bigl(s_1(b^2)+2s_2(b)\bigr) \\
  \quad +\ 2\bigl(s_1(a^2b) s_1(b) +s_1(a) s_1(ab^2)   - s_1(a^2b^2)+  s_2(ab) + s_2(a)s_2(b)\bigr),
  \end{array}\\
  &\!\begin{array}{l}=\ s_1(a^2)s_1(b^2) - 5s_1(a^2b^2)-4s_2(ab) - 2s_2(a)s_1(b^2)-2s_1(a^2)s_2(b) \\
  \quad +\ 2s_1(a^2b) s_1(b) +2s_1(a) s_1(ab^2) + 2s_2(a)s_2(b).
  \end{array}
\end{align*}
Hence the difference $\norm_{K(A)}(\dot a \otimes \dot b) - \disc_A(1\wedge a\wedge b, 1\wedge a\wedge b)$ is
\[\begin{gathered}5s_1(a^2b^2)+ 4s_2(ab) + s_2(a)s_1(b^2)+s_1(a^2)s_2(b)\\ - 2s_1(a^2b)s_1(b) - 2s_1(a)s_1(ab^2) - s_2(a)s_2(b).\end{gathered}\]

Next we expand $s_1(ab)^2-s_1(ab)\trace_{\discalg(A)}\bigl(\gamma(1,b,a)\bigr) + \norm_{\discalg(A)}\bigl(\gamma(1,b,a)\bigr)$.
We computed the trace of $\gamma(1,b,a)$ above as $\gamma(1,b,a)+\gamma(1,a,b) = s_1(a)s_1(b) - s_1(ab)$.
Therefore the quantity $s_1(ab)^2 - s_1(ab)\trace_{\discalg(A)}\bigl(\gamma(1,b,a)\bigr)$ is
\[\begin{aligned}
 & s_1(ab)^2 - s_1(ab)\bigl(s_1(a)s_1(b) - s_1(ab)\bigr)
 = 2s_1(ab)^2 - s_1(a)s_1(ab)s_1(b) \\
 &\quad= 2\bigl(s_1(a^2b^2)+2s_2(ab)\bigr) - \bigl(s_1(a^2b) s_1(b) +s_1(a) s_1(ab^2) \\
 &\hspace{2in}  - s_1(a^2b^2)+  s_2(ab) + s_2(a)s_2(b)\bigr)\\
 &\quad= 3s_1(a^2b^2) + 3s_2(ab) - s_2(a)s_2(b) - s_1(a^2b)s_1(b) - s_1(a)s_1(ab^2).
\end{aligned}\]

Finally, the norm of $\gamma(1,b,a)$ may similarly be computed as
\begin{align*}
\norm(\gamma(1,b,a)) &= \gamma(1,b,a)\gamma(1,a,b)\\
&= \gamma(1,ab,ab) + \gamma(b,b,a^2) + \gamma(a,b^2,a)\\
&= s_2(ab)+s_1(a^2)s_2(b) - s_1(a^2 b)s_1(b) + s_1(a^2b^2)\\
&\quad + s_2(a)s_1(b^2) - s_1(a)s_1(a b^2) + s_1(a^2b^2)\\
&= 2s_1(a^2b^2)+s_2(ab) - s_1(a^2b)s_1(b) - s_1(a)s_1(ab^2)\\
&\quad + s_1(a^2)s_2(b) + s_2(a)s_1(b^2).
\end{align*}
So all together we have
\begin{align*}
\norm_{\discalg(A)}\bigl(s_1(ab) - \gamma(1,b,a)\bigr) &= 5s_1(a^2b^2) + 4s_2(ab) - s_2(a)s_2(b)\\
&\quad - 2s_1(a^2b)s_1(b) - 2s_1(a)s_1(ab^2)\\
&\quad + s_1(a^2)s_2(b) + s_2(a)s_1(b^2).
\end{align*}

So the isomorphism $D(A) \to \discalg(A)$ preserves the norm and trace of elements of the form $\dot a\otimes \dot b$, and it is then an isomorphisms of $R$-algebras, as we wanted to show.
\end{proof}

\subsection{The existence of $\Sigma_F$}

Next we prove \cref{lemma-Sigma-existence}, that the element $\det\bigl(\conjugate{x_i}{j}\bigr)_{i,j} + \Pf(F-\transpose{F})$ of $\Xbase^{\otimes n}$ is a multiple of $2$ by an element of $\fixpower{\Xbase}{n}{\A{n}}$.

\begin{proof}[Proof of \cref{lemma-Sigma-existence}]
 A permutation of the $x_i$ permutes the rows of the matrix $\bigl(\conjugate{ x_i}{j}\bigr)_{i,j=1}^n$, so its determinant is $\altgroup{n}$-invariant.  
 And since the entries of $F-\transpose{F}$ are $\permgroup{n}$-invariant, so is its Pfaffian $\Pf(F-\transpose{F})$.  
 Thus we need only show that the sum $\det\bigl(\conjugate{ x_i}{j}\bigr)_{i,j=1}^n + \Pf(F-\transpose{F})$ is a multiple of $2$ in $\Xbase^{\otimes n}$, i.e.\ that it is sent to $0$ in $\Xbase^{\otimes n}/(2)\cong (\Xbase/(2))^{\otimes n}$.  
 Now consider $\Pf(F-\transpose{F})$ modulo $2$.  
 Since $F_{ij}-F_{ji} = 2f_{ij}-B_{ij}\equiv B_{ij}$ for $i<j$, the Pfaffian $\Pf(F-\transpose{F})$ is congruent modulo $2$ to the Pfaffian of the alternating $n\times n$ matrix $B_0$ whose $ij$th entry is $B_{ij}$ if $i<j$ and $-B_{ij}$ if $i>j$.  
 Working in $\Xbase^{\otimes n}/(2)$, we can expand this Pfaffian as 
 \[
  \Pf(B_0) = \sum_{S\in\binom{n}{2,\dots,2}} \prod_{(i,j)\in S} B_{ij} = \sum_{S\in\binom{n}{2,\dots,2}} \prod_{(i,j)\in S} \sum_{k\neq\ell}\conjugate{ x_i}{k}\conjugate{ x_j}{\ell},
 \]
 where the outer sum is over all partitions $S$ of $\set{n}$ into $n/2$ pairs $\{i,j\}$ with $i\neq j$.  
 Expanding this sum, we obtain a sum of products of the form $\conjugate{ x_1}{k_1}\dots\conjugate{ x_n}{k_n}$; we claim that such a monomial appears an odd number of times in the sum if and only if the $k_i$ are all distinct.  
 
 To wit, suppose that $k_i\neq k_j$ for all $i\neq j$; 
 then for each partition $S$ of $\set{n}$ into ordered pairs, the monomial $\conjugate{ x_1}{k_1}\dots\conjugate{ x_n}{k_n}$ appears exactly once in $\prod_{(i,j)\in S} \sum_{k\neq\ell}\conjugate{ x_i}{k}\conjugate{ x_j}{\ell}$, namely by setting $k=k_i$ and $\ell=k_j$ for each $\{i,j\}\in S$.  
 There are $(n-1)!! = (n-1)(n-3)\dots5\cdot3\cdot1$ such partitions $S$, so the monomial $\conjugate{ x_1}{k_1}\dots\conjugate{ x_n}{k_n}$ appears an odd number of times in the expansion of $\Pf(B_0)$.
 
 On the other hand, suppose that $k_{i_1}=k_{i_2}$ for some $i_1\neq i_2$ in $\set{n}$.  
 We will produce a fixed-point-free involution $\tau$ on the subset of partitions $S$ for which $\conjugate{ x_1}{k_1}\dots\conjugate{ x_n}{k_n}$ appears in the expansion of $\prod_{\{i,j\}\in S} \sum_{k\neq\ell}\conjugate{ x_i}{k}\conjugate{ x_j}{\ell}$.  
 If $S$ is such a partition, then $k_i\neq k_j$ for each $\{i,j\}\in S$.  
 In particular, $\{i_1,i_2\}$ is not in $S$; 
 let $j_1$ and $j_2$ be such that $\{i_1,j_1\}$ and $\{i_2,j_2\}$ are elements of $S$. 
 Then set $\tau(S)$ to be the partition of $\set{n}$ differing from $S$ only in that it contains $\{i_1,j_2\}$ and $\{i_2,j_1\}$ instead of $\{i_1,j_1\}$ and $\{i_2,j_2\}$.  
 The monomial $\conjugate{ x_1}{k_1}\dots\conjugate{ x_n}{k_n}$ again appears in $\tau(S)$ since $k_{i_1}=k_{i_2}\neq k_{j_2}$ and $k_{i_2}=k_{i_1}\neq k_{j_1}$; 
 furthermore $\tau(S)$ never equals $S$ and $\tau(\tau(S))$ always equals $S$.  
 Thus there is an even number of such partitions $S$, and the monomial $\conjugate{ x_1}{k_1}\dots\conjugate{ x_n}{k_n}$ appears an even number of times in the expansion of $\Pf(B_0)$.
 
 We have shown that in $\Xbase^{\otimes n}/(2)$,
 \[\Pf(F-\transpose{F}) = \Pf(B_0) = \sum_{\sigma\in\permgroup{n}}\conjugate{x_1}{\sigma(1)}\dots\conjugate{x_n}{\sigma(n)} = \det\bigl(\conjugate{x_i}{j}\bigr)_{i,j=1}^n,\]
 since all other terms in the sum appear an even number of times, and hence vanish. Thus the sum $\det\bigl(\conjugate{x_i}{j}\bigr)_{i,j=1}^n + \Pf(F-\transpose{F})$ vanishes modulo $2$, so the sum is a multiple of $2$ in $\Xbase^{\otimes n}$.
 \end{proof}
 
\subsection{A determinant identity}

Lastly, we prove the claim in the proof of \ref{loos-iso} that $\det(B_{ij})_{i,j=1}^n = (1-n)\det\bigl(e_1(x_ix_j)\bigr)_{i,j=1}^n$ for $n$ even.

\begin{lemma}\label{lemma-disc-identity}
 Let $n$ be a natural number, and consider the following two matrices over the polynomial ring $\Z[x_1,\dots,x_n]^{\otimes n}$: 
 Let $B$ be the $n\times n$ matrix whose $ij$th entry is $B_{ij} = e_1(x_i)e_1(x_j)-e_1(x_ix_j)$, and let $D$ be the $n\times n$ matrix whose $ij$th entry is $D_{ij} = e_1(x_ix_j)$.  
 Then
 \[
 \det(B) = (-1)^n(1-n)\det(D).
 \]
\end{lemma}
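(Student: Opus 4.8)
The plan is to recognize $B$ and $D$ as congruence transforms $M\mapsto V M \transpose{V}$ of two elementary integer matrices. First I would introduce the $n\times n$ matrix $V$ over $\Z[x_1,\dots,x_n]^{\otimes n}$ whose $(i,j)$ entry is $\conjugate{x_i}{j}$, the $n\times n$ identity matrix $I$, and the $n\times n$ matrix $J$ all of whose entries equal $1$. Computing entrywise, and using that elements supported in a common tensor slot multiply slotwise so that $\conjugate{x_i}{k}\conjugate{x_j}{k}=\conjugate{x_ix_j}{k}$, one finds
\[
(V\transpose{V})_{ij}=\sum_{k=1}^{n}\conjugate{x_i}{k}\conjugate{x_j}{k}=\sum_{k=1}^{n}\conjugate{x_ix_j}{k}=e_1(x_ix_j)=D_{ij},
\]
so $D=V\transpose{V}$; and since $(J-I)_{k\ell}$ equals $1$ when $k\neq\ell$ and $0$ when $k=\ell$,
\[
\bigl(V(J-I)\transpose{V}\bigr)_{ij}=\sum_{k\neq\ell}\conjugate{x_i}{k}\conjugate{x_j}{\ell}=e_1(x_i)e_1(x_j)-e_1(x_ix_j)=B_{ij},
\]
so $B=V(J-I)\transpose{V}$.

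Next, because these are matrices over the commutative ring $\Z[x_1,\dots,x_n]^{\otimes n}$, the determinant is multiplicative; hence $\det(D)=\det(V)^{2}$ and $\det(B)=\det(V)^{2}\det(J-I)$, and therefore $\det(B)=\det(J-I)\det(D)$. All that remains is to compute the integer $\det(J-I)$: diagonalizing $J$ over $\Q$, it has eigenvalue $n$ with multiplicity $1$ and eigenvalue $0$ with multiplicity $n-1$, so $J-I$ has eigenvalues $n-1$ and $-1$ (the latter with multiplicity $n-1$), giving $\det(J-I)=(n-1)(-1)^{n-1}=(-1)^{n}(1-n)$. (Equivalently, one may add all rows of $J-I$ to the first, factor out $n-1$, and clear the first column by elementary row operations.) Substituting, $\det(B)=(-1)^{n}(1-n)\det(D)$, as claimed; specializing to even $n$ recovers the form $\det(B_{ij})=(1-n)\det\bigl(e_1(x_ix_j)\bigr)$ invoked in the proof of \cref{loos-iso}.

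I do not expect any real obstacle: once the factorizations $D=V\transpose{V}$ and $B=V(J-I)\transpose{V}$ are in place, the result is two lines of linear algebra. The only steps requiring a little care are the bookkeeping for those two identities — in particular that all the entries $\conjugate{x_i}{k}$ commute with one another and multiply slotwise, which holds since $\Z[x_1,\dots,x_n]^{\otimes n}$ is commutative — and the elementary evaluation of the scalar determinant $\det(J-I)$.
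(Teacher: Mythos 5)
Your proof is correct, and it takes a genuinely different and cleaner route than the paper's. You factor both matrices through the same congruence: with $V_{ij}=\conjugate{x_i}{j}$ you get $D=V\transpose{V}$ and $B=V(J-I)\transpose{V}$, so multiplicativity of the determinant immediately gives $\det(B)=\det(J-I)\det(D)$, and the whole lemma reduces to the integer computation $\det(J-I)=(-1)^{n}(1-n)$. (Both factorizations check out: $\conjugate{x_i}{k}\conjugate{x_j}{k}=\conjugate{x_ix_j}{k}$ gives the diagonal terms, and the off-diagonal terms $\sum_{k\neq\ell}\conjugate{x_i}{k}\conjugate{x_j}{\ell}$ are exactly $e_1(x_i)e_1(x_j)-e_1(x_ix_j)$.) The paper instead writes $B=v\transpose{v}-D$ with $v_i=e_1(x_i)$, observes that $\det(D-\lambda v\transpose{v})$ is affine in $\lambda$ because $v\transpose{v}$ has rank one, and then pins down the linear coefficient by showing $D-\frac1n v\transpose{v}=\frac1n A\transpose{A}$ is singular, which requires passing to $\Q[x_1,\dots,x_n]^{\otimes n}$ and a graph-theoretic argument that any $n$ columns of the $n\times\binom{n}{2}$ difference matrix $A$ are linearly dependent (a graph with $n$ edges on $n$ vertices contains a cycle). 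Your argument avoids the rank-one perturbation, the base change to $\Q$, and the rank bound entirely, and works verbatim over any commutative ring; the only thing the paper's route arguably buys is the explicit positive-semidefinite-style factorization $nD-v\transpose{v}=A\transpose{A}$, which is not needed elsewhere. The one point worth stating explicitly in a write-up is that $\det(B)=\det(V)\det(J-I)\det(\transpose{V})=\det(J-I)\det(V\transpose{V})$ needs no nondegeneracy of $V$, only commutativity of the ring — which you do note.
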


\begin{proof}
 First, note that if $n=0$ the identity becomes the tautology $1=1$, so assume $n\geq 1$. 
 Let $v$ be the column vector whose $i$th entry is $e_1(x_i)$; then $B = v\transpose{v} - D$.  
 Thus we need only prove that $\det(D - v\transpose{v}) = (1-n)\det(D)$; 
 we will prove this in the larger ring $\Q[x_1,\dots,x_n]^{\otimes n}$.  
 Note that since $v\transpose{v}$ is a rank-$1$ matrix, the polynomial
 \[
 p(\lambda) = \det(D-\lambda v\transpose{v})
 \]
 has constant term $\det(D)$ and all terms of degree at least $2$ vanishing; this can be checked by changing to a basis in which $v\transpose{v}$ is in row-echelon form, or more elementarily by showing that all contributions to the determinant involving at least two factors of $\lambda$ must cancel. 
 Writing $p(\lambda)=\det(D) - \lambda a$, we would like to show that $a=n\,\det(D)$ so that $p(1) = (1-n)\det(D)$.
 To prove this, we will show that $p(1/n) = 0$, i.e.\ that $D-\frac{1}{n}v\transpose{v}$ is singular.
 
 Let $A$ be the $n\times\binom{n}{2}$ matrix with rows indexed by elements of $\set{n}$ and columns indexed by pairs $(k,\ell)$ with $1\leq k < \ell \leq n$, and whose $(i,(k,\ell))$th entry is $A_{i,(k,\ell)}=\conjugate{x_i}{k} - \conjugate{x_i}{\ell}$.
 Then we claim that $D-\frac{1}{n}v\transpose{v} = \frac1n A\transpose{A}$.
 
 Indeed, we have
 \begin{align*}
  (A\transpose A)_{ij} &= \sum_{k<\ell} \left(\conjugate{x_i}{k}-\conjugate{x_i}{\ell}\right)\left(\conjugate{x_j}{k}-\conjugate{x_j}{\ell}\right)\\
  &= \sum_{k<\ell} \left(\conjugate{x_i}{k}\conjugate{x_j}{k}+\conjugate{x_i}{\ell}\conjugate{x_j}{\ell}\right) - \sum_{k<\ell}\left(\conjugate{x_i}{k}\conjugate{x_j}{\ell}+\conjugate{x_i}{\ell}\conjugate{x_j}{k}\right)\\
  &= \sum_{k\neq\ell} \conjugate{x_i}{k}\conjugate{x_j}{k} - \sum_{k\neq\ell} \conjugate{x_i}{k}\conjugate{x_j}{\ell}\\
  &= (n-1)\sum_{k} \conjugate{x_i}{k}\conjugate{x_j}{k} - \sum_{k\neq\ell} \conjugate{x_i}{k}\conjugate{x_j}{\ell}\\
  &= n\sum_{k} \conjugate{x_i}{k}\conjugate{x_j}{k} - \sum_{k,\ell} \conjugate{x_i}{k}\conjugate{x_j}{\ell}\\
  &= n\sum_{k}\conjugate{x_i}{k}\conjugate{x_j}{k} - \sum_{k}\conjugate{x_i}{k}\sum_{\ell} \conjugate{x_j}{\ell}\\
  &= n e_1(x_i x_j) - e_1(x_i)e_1(x_j)\\
  &= n D_{ij} - v_iv_j.
 \end{align*}
 Thus it remains only to show that $A$ has rank less than $n$; 
 then $D-\frac{1}{n}v\transpose{v}$ will also have rank less than $n$. 
  So choose any $n$ distinct columns of $A$, and we will show that they are linearly dependent.  
  Indeed, such a choice amounts to choosing $n$ distinct pairs of distinct elements from $\set{n}$, which is the data of a graph with $n$ edges and $n$ vertices.  
  Such a graph must contain a cycle, since a tree with $n-1$ edges already spans $n$ vertices. 
   Then our collection of $n$ columns of $A$ contains a subset of the form (up to changing the overall sign of some columns)
 \[\left(\begin{matrix}
  \conjugate{x_1}{k_1} - \conjugate{x_1}{k_2}\\
  \vdots\\
  \conjugate{x_n}{k_1} - \conjugate{x_n}{k_2}
 \end{matrix}\right), \left(\begin{matrix}
  \conjugate{x_1}{k_2} - \conjugate{x_1}{k_3}\\
  \vdots\\
  \conjugate{x_n}{k_2} - \conjugate{x_n}{k_3}
 \end{matrix}\right), \dots,
 \left(\begin{matrix}
  \conjugate{x_1}{k_{\ell}} - \conjugate{x_1}{k_1}\\
  \vdots\\
  \conjugate{x_n}{k_{\ell}} - \conjugate{x_n}{k_1}
 \end{matrix}\right),
\]
 and the sum of these is $0$.  
 Thus every $n\times n$ minor of $A$ vanishes, so $A$ has rank at most $n-1$, and so does $D-\frac1n v\transpose{v}$.  
 Then $\det(D-\frac1n v\transpose{v})=0$, so $\det(D-v\transpose{v}) = (1-n)\det(D)$ and $\det(B) = (-1)^n(1-n)\det(D)$.
\end{proof}

\bibliographystyle{acm}
\bibliography{RefList}

\end{document}